\documentclass[a4paper,12pt]{article}
\usepackage[english]{babel}
\usepackage{amsmath,amsthm,amssymb}
\usepackage{float,color,psfrag}
\usepackage{amstext,amscd}
\usepackage[colorlinks,citecolor=blue]{hyperref}
\usepackage{dsfont}

\usepackage[margin=2cm]{geometry}

\usepackage{tikz}

\usepackage{booktabs}

\usepackage{mathtools}
\mathtoolsset{centercolon}

\usepackage{newpxtext, newpxmath}

\usepackage{enumitem}
\usepackage{xspace}
\usepackage{stackrel}




\newcommand{\Eel}{E_{\mathrm{el}}}
\newcommand{\Ep}{E_{\mathrm{p}}}

\newcommand{\Fcomp}{\mathcal{F}_{\mathrm{comp}}}
\newcommand{\Sflow}{S_{\mathrm{flow}}}
\newcommand{\cflow}{c_{\mathrm{flow}}}


\newcommand{\Ed}{E_{\delta}}

\newcommand{\T}{\mathbb{T}^1}
\newcommand{\spint}{\int_{\T}}


\DeclareMathOperator{\argmin}{argmin}

\renewcommand{\L}{\mathcal{L}}
\renewcommand{\H}{\mathcal{H}}

\newcommand{\1}{\mathds{1}}



\DeclareRobustCommand\circled[1]{\tikz[baseline=(char.base)]{\node[shape=circle,draw,inner sep=1.2pt] (char) {#1};}}


\newtheorem{theorem}{Theorem}
\newtheorem{remark}{Remark}
\newtheorem{lemma}{Lemma}

\title{Modelling adhesion-independent cell migration}

\author{Gaspard Jankowiak\textsuperscript{1}
\and Diane Peurichard\textsuperscript{2}
\and Anne Reversat\textsuperscript{3}
\and Christian Schmeiser\textsuperscript{4}
\and Michael Sixt\textsuperscript{3}}

\date{\today}

\begin{document}

\markboth{Gaspard Jankowiak, Diane Peurichard, Anne Reversat, Chrisitian Schmeiser, Michael Sixt}{Modelling adhesion-independent cell migration}

\maketitle
\begin{center}
\small
1-RICAM-\"Osterreichische Akademie der Wissenschaften, Postgasse 7-9, 1010, Wien, Austria \\
2-INRIA, LJLL, UMPC, 4, place Jussieu, Couloir 16-26, 3e étage 75252 Paris Cedex 05, France\\
3-Institute of Science and Technology Austria, Am Campus 1, 3400 Klosterneuburg, Austria\\
4-Faculty of Mathematics, University of Vienna, Oskar-Morgenstern Platz 1, 1090 Vienna, Austria
\end{center}

\begin{abstract}
A two-dimensional mathematical model for cells migrating without adhesion capabilities is presented and
analyzed. Cells are represented by their cortex, which is modelled as an elastic curve, subject to an internal pressure force. Net polymerization or depolymerization in the cortex is modelled via local addition or removal of material, driving a
cortical flow. The model takes the form of a fully nonlinear degenerate parabolic system. An existence analysis is
carried out by adapting ideas from the theory of gradient flows. Numerical simulations show that these simple rules can account for the behavior observed in experiments, suggesting a possible mechanical mechanism for adhesion-independent motility.
\end{abstract}

{\footnotesize
\noindent{\bf Keywords:} Variational methods; weak solutions; cell motility modelling; cellular cortex; actin polymerization

\noindent{\bf AMS Subject Classification}:
    35K40,   
    35K51,   
    35Q92,   
    35A15,   
    2C17     
}

\section{Introduction}
\label{sec:introduction}

One of the most important cellular behaviors is crawling migration. It is observed in many cellular systems both in culture and in vivo \cite{paluch_focal_2016,phillipson_intraluminal_2006}, and involved in many essential physiological or pathological processes (wound healing, embryonic development, cancer metastasis etc. \cite{wolf_molecular_2006}). Since the works of Abercrombie in 1970 \cite{abercrombie_locomotion_1970}, which described the multistep model of lamellipodia-based cell migration, numerous authors have studied the mechanisms of actin-based cell migration. As a result, despite some remaining open questions, lamellipodial migration is now well understood and described\cite{mogilner_mathematics_2009}. This migration mode implies that specific adhesion points transmit intracellular pulling forces from the cytoskeleton to the substrate \cite{rafelski_crawling_2004,vicente-manzanares_integrins_2009}. Actin filaments polymerize below the leading plasma membrane generating pushing forces, and plasma membrane tension resists actin network expansion, pushing back the actin filaments into the cell body. Through adhesion complexes linking the cytoskeleton to the substrate, these retrograde forces are translated into forward locomotion of the cell body \cite{rafelski_crawling_2004}.

Yet, recent studies indicate that cell migration can be achieved without adhesion in confining three-dimensional environments \cite{bergert_force_2015}. Increasing levels of confinement seem to favor adhesion-independent migration in many cell types \cite{friedl_amoeboid_2001}, and can trigger transitions from adhesion-based towards low-adhesive migration modes. However, too strong confinement decreases and even prevents migration \cite{wolf_physical_2013}, due to cell stiffness and nucleus volume. If in the last decade adhesion-independent migration has emerged as a possibly common migration mode, the mechanisms of cell propulsion in this case are still poorly understood. So far in the literature, there is only one known alternative to lamellipodial migration: membrane blebs \cite{blaser_migration_2006,charras_blebs_2008}. These blebs are cellular extensions free of actin filaments, and generated by intracellular hydrostatic pressure \cite{tinevez_role_2009}. Once generated, these blebs grow until a new actin cortex is reassembled inside, which eventually contract and allow the cell to move \cite{yin_lim_computational_2013}. However, numerous studies display cells migrating in an adhesion-free manner without bleb formation.

Several physical mechanisms have been proposed for force transmission between cell and substrate during migration without focal adhesions: (i) cell migration by swimming (by creation of blebs, \cite{yin_lim_computational_2013}), (ii) force transmission based on cell-substrate intercalations of lateral protrusions into gaps in the matrix, (iii) chimneying force transmission where cells push against the obstacles \cite{hawkins_spontaneous_2011,malawista_random_2000} or again (iv) flow-friction driven force transmission. In this last case, mechanisms based on non-specific friction between the cell and the substrate have been investigated to account for adhesion-independent migration \cite{hawkins_spontaneous_2011}. Here, intracellular forces generated by the cytoskeleton are transmitted to the substrate via non-specific friction which has been experimentally measured in \cite{bergert_force_2015,Laemmermann}. The molecular origin of nonspecific friction has not been experimentally investigated. Friction could result from interactions between molecules at the cell surface and the substrate, and unveiling the microscopic origin of nonspecific friction will be an important question for future studies.

In this paper, we propose a simplified 2D model for focal adhesion-independent cell migration, based on the mechanisms (iv). We aim to develop a simplified framework to study whether adhesion-free migration could be primarily driven by simple mechanical features.


Our model is focused on the cell's cortex, which is assumed to be an elastic material confined in the horizontal plane. Because of the cytoplasmic pressure, it is subject to outwards pressure forces. Mathematically, this takes the form of a system of parabolic equations, where the polymerization process leads to an advection-type term. This continuous formulation also allows to properly define the reaction forces compensating mass displacement due to membrane renewal. With this very simple model, we are able to trigger cell migration, with speed depending on the geometrical characteristics of the obstacles. Our results seem to be in qualitative agreement with the biological observations. In Section~\ref{sec:biological observations}, we present the biological observation of leukocyte migrating cells and the experimental setting. Section~\ref{sec:model} is concerned with the mathematical model which is analyzed in Section~\ref{sec:analytical results}. Section~\ref{sec:numerical results} presents our numerical results.

\section{Leukocyte migration in artificial microchannels}
\label{sec:biological observations}

To address the question of adhesion-independent migration of leukocytes, we took advantage of a specific line of lymphocytes that allows for genetic modifications. This enabled us to block the synthesis of talin, an adaptor protein essential for adhesion functionality, using a genetic engineering technique known as CRISPR/Cas9 \cite{Shalem}. In addition, we used well-established microfabricated channels \cite{Vargas} to mimic the confined in vivo environment, coupled to a home-made microfluidic set-up \cite{Reversat}. The top and bottom walls of these channels are flat, and the side walls can have various 
structures. We observe that cells, in which talin has been knocked out, are completely unable to adhere and migrate in channels with flat side walls \cite{Reversat}. Strikingly, their motility is restored in channels with structured side walls
with wave length of the wall structure on the order of magnitude of the cell diameter (see Figure~\ref{FigRatchExp}). This
indicates that adhesion-free motility relies on a structured confinement.

\begin{figure}[H]
\includegraphics[width=\textwidth]{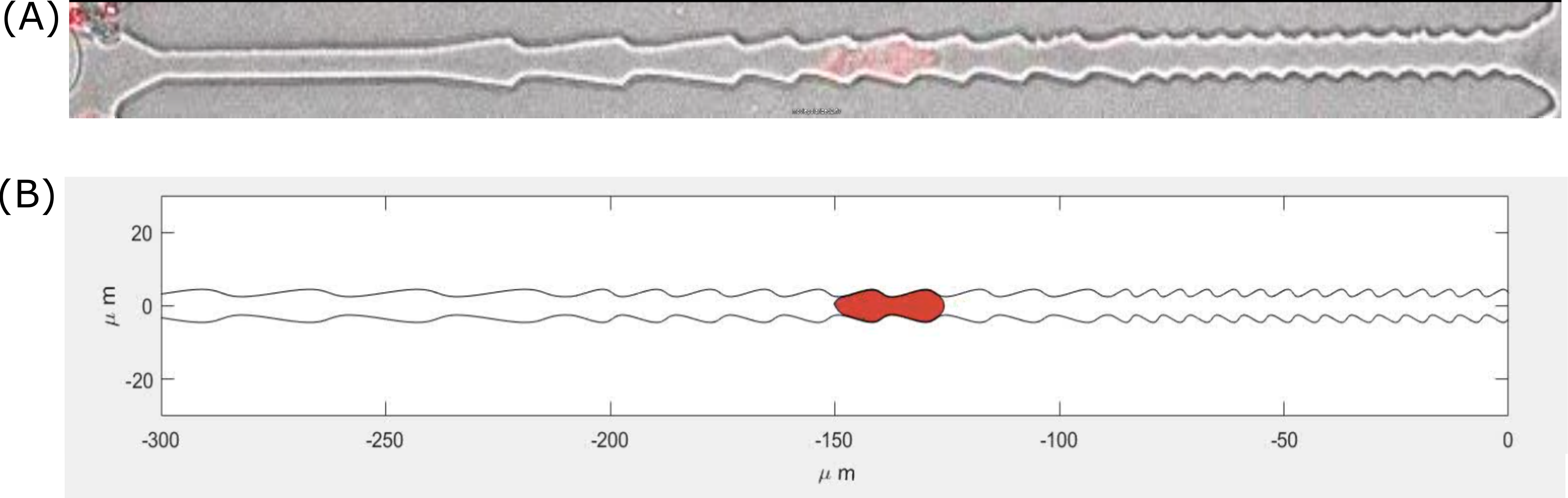}
\caption{(A) Leukocyte (in red) migrating from left to right in a ratchet channel. The channel has three sections with
wavelengths $6\mu m$, $12 \mu m$, and, respectively, $24 \mu m$. (B) Example of a simulation with our mathematical 
model using the setting of the experiment (A). \label{FigRatchExp}}
\end{figure}

\section{The mathematical model}
\label{sec:model}

Since the mechanisms producing the behavior described in the previous section are not known, we propose
a rather simple model for the essential components. The essential idea
is that, guided by a chemotactic signal, the cell polarizes with increased actin polymerization near the front end.
This is assumed to induce a flow of the cell cortex from front to rear, where depolymerization dominates.
The cortex is assumed to be an elastic material with a tendency to equidistribute actin along the cell periphery.
This mechanism, together with a constant cytoplasmic excess pressure (actually the pressure difference
between cytoplasmic and extracellular pressure), determines the cell shape.

The most critical model ingredient are the forces between the cell and its environment. We assume an unspecific friction
with the extracellular liquid, assumed at rest, counteracted by a compensating force, which can be seen
as a consequence of the intracellular transport of actin from rear to front. This compensation is chosen such that
the cell does not move in an unconstrained environment.

\begin{figure}[h!]
\centering
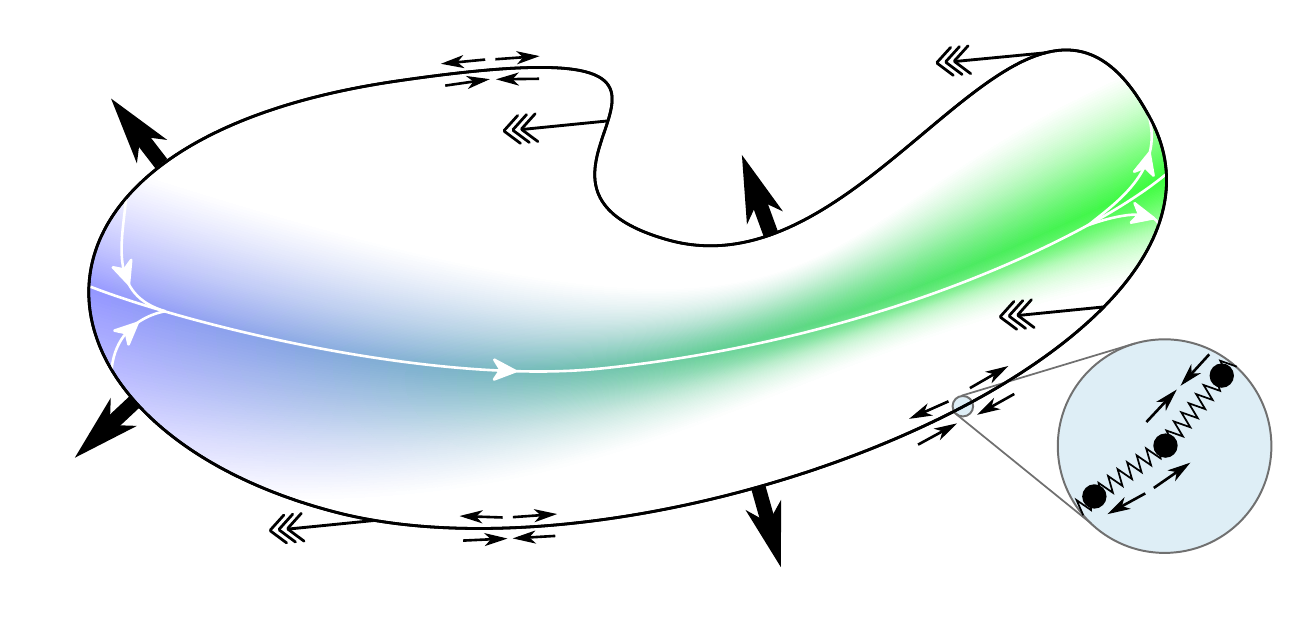
\caption{Schematic description of the cell. The cortex is subject to the a
number of physical effects: \circled{1} pressure forces, \circled{2} linear elasticity
forces, \circled{3}-\circled{4} de-/polymerization, \circled{5} reaction forces
due to transport. Depolymerized actin is transported from the back of the cell
(blue region) to the front (green region) where it becomes part of the cortex
again via polymerization. The transport inside the cell ---in white--- is not
modelled, but by conservation of the center of mass, it results in a reaction
force on the cortex \circled{5}. The inlet show that at the discrete level (for numerical experiments),
on can consider the cortex as a chain of mass points, linked by linear springs.}
\label{CellForces}
\end{figure}

Motivated by the experimental setup we choose a two-dimensional model, which seems reasonable for
the experimental situation, where the cell is confined between two flat surfaces. Figure~\ref{CellForces} illustrates
a discrete version of the model, where the cell cortex is described by mass points.
We return to this description in Section \ref{sec:numerical results} for simulation purposes, but here we shall formulate
a continuous version of the model, where at time $t\ge 0$ the cortex is represented by a Jordan curve
\begin{equation*}
    \Gamma(t) = \left\{X(s,t) :\, s\in\T\right\} \subset \mathbb{R}^2 \,.
\end{equation*}
The one-dimensional torus $\T$ will be represented by the unit interval, and the variable $s$ corresponds to the amount
of actin material along the cortex, \emph{i.e.} for some non empty interval $[s_1, s_2]$, the amount of actin on the corresponding piece of cortex is $s_2 - s_1$,
this will be formalized later on.
The total amount is normalized to~$1$. The interior of $\Gamma(t)$ is denoted by $\Omega(t)$, such
that $\Gamma(t) = \partial\Omega(t)$. Assuming that $\Gamma(t)$ is smooth enough, we denote by $\tau(t,s)$ and
$n(t,s)$ the unit tangent and unit outward normal vectors. Assuming positive orientation of the parametrization, we have
\begin{equation*}
   \tau = \frac{\partial_s X}{|\partial_s X|} \,,\qquad n = -\tau^\bot \,,
\end{equation*}
with the convention $(a,b)^\bot = (-b,a)$. The notation is illustrated in Figure~\ref{fig:cell parameterization}.

\begin{figure}[h]
    \def\svgwidth{0.5\columnwidth}
    \begin{center}
\begingroup%
  \makeatletter%
  \providecommand\color[2][]{%
    \errmessage{(Inkscape) Color is used for the text in Inkscape, but the package 'color.sty' is not loaded}%
    \renewcommand\color[2][]{}%
  }%
  \providecommand\transparent[1]{%
    \errmessage{(Inkscape) Transparency is used (non-zero) for the text in Inkscape, but the package 'transparent.sty' is not loaded}%
    \renewcommand\transparent[1]{}%
  }%
  \providecommand\rotatebox[2]{#2}%
  \newcommand*\fsize{\dimexpr\f@size pt\relax}%
  \newcommand*\lineheight[1]{\fontsize{\fsize}{#1\fsize}\selectfont}%
  \ifx\svgwidth\undefined%
    \setlength{\unitlength}{349.58102489bp}%
    \ifx\svgscale\undefined%
      \relax%
    \else%
      \setlength{\unitlength}{\unitlength * \real{\svgscale}}%
    \fi%
  \else%
    \setlength{\unitlength}{\svgwidth}%
  \fi%
  \global\let\svgwidth\undefined%
  \global\let\svgscale\undefined%
  \makeatother%
  \begin{picture}(1,0.46454352)%
    \lineheight{1}%
    \setlength\tabcolsep{0pt}%
    \put(0,0){\includegraphics[width=\unitlength,page=1]{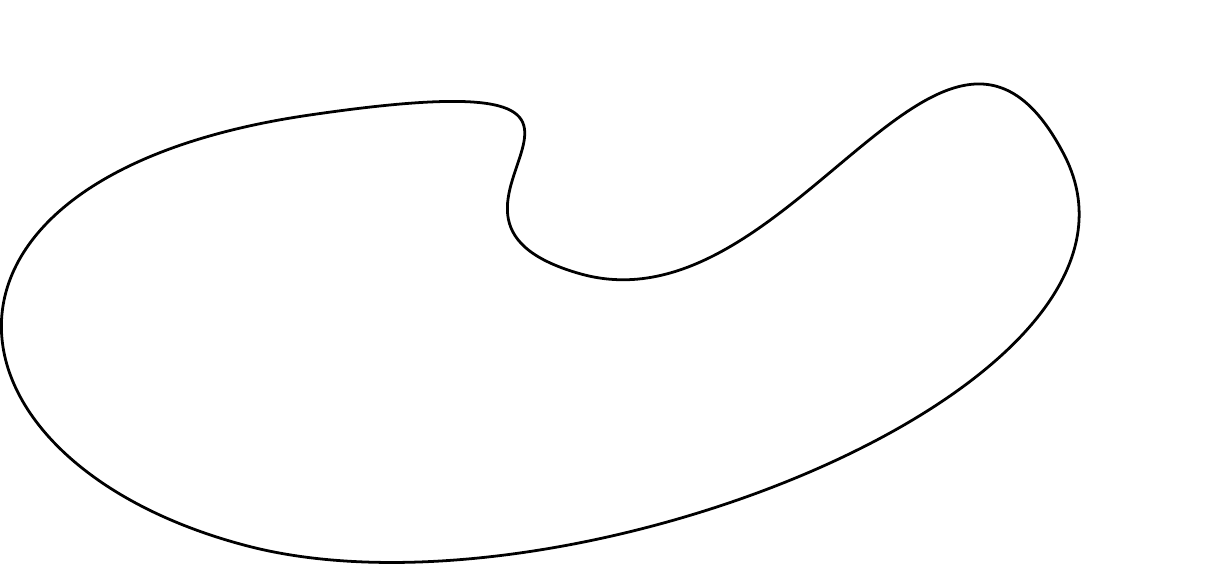}}%
    \put(0.55406796,0.41671506){\color[rgb]{0,0,0}\makebox(0,0)[lt]{\lineheight{0}\smash{\begin{tabular}[t]{l}$\tau = \frac{\partial_s X}{|\partial_s X|}$\end{tabular}}}}%
    \put(0.94069847,0.32517119){\color[rgb]{0,0,0}\makebox(0,0)[lt]{\lineheight{0}\smash{\begin{tabular}[t]{l}$n=-\tau^\perp$\end{tabular}}}}%
    \put(0,0){\includegraphics[width=\unitlength,page=2]{imgs/cell_parametrization.pdf}}%
    \put(0.6240214,0.17445163){\color[rgb]{0,0,0}\makebox(0,0)[lt]{\lineheight{0}\smash{\begin{tabular}[t]{l}$\partial_s X^\perp$\end{tabular}}}}%
  \end{picture}%
\endgroup%

    \end{center}
    \caption{The parameterization and associated vector quantities}
    \label{fig:cell parameterization}
\end{figure}

The cortex is assumed to be elastic and in equilibrium if $|\partial_s X| = 1$, such that~$1$ represents the scaled total
equilibrium length of the cortex. An elastic resistance against stretching, but not against compression, is described
by the potential energy functional
\begin{equation}\label{Espring}
\Eel(X) = \frac{1}{2} \spint \left(|\partial_s X|-1\right)_+^2 ds\,.
\end{equation}
Neglecting resistance against compression can be seen as a convexification of the elastic energy, which
facilitates the analysis of Section \ref{sec:analytical results}. Actually, we expect the cortex to be always under tension,
such that this assumption should not be relevant from a modelling point of view.

This expectation relies on another model ingredient, a cytoplasmic pressure exceeding the extracellular pressure by
a constant amount $p>0$. The associated potential energy contribution is given by
\begin{equation}\label{Epressure}
    E_\text{p}(X) = -p \left|\Omega(t)\right| = \frac{p}{2} \spint X\cdot \partial_s X^\perp\, ds\,.
\end{equation}
The assumption of a prescribed constant value of $p$ can be seen as a model simplification. The volume of the cell is mainly dictated
by the amount of water it contains, which is subject to osmosis, which can be neglected on the time scales at play here.
It would then be closer to reality to assume a fixed prescribed cell volume, measured by the area $|\Omega(t)|$. In this case $p$
would become a time dependent unknown with the mathematical interpretation of a Lagrange multiplier.



The space constraints, \emph{i.e.} the channel walls, are modelled by requiring
\begin{equation}
\Omega(t)\subset\Omega_c\,,
\label{eq:hard space constraint}
\end{equation}
where $\Omega_c\subset \mathbb{R}^2$ denotes the inside of the channel. For analysis purposes the obstacles
will be softened by introducing the energy contribution
\begin{equation}\label{Eobst}
  E_{\text{obst},\delta}(X) = \spint W_\delta(X) ds \,,\qquad W_\delta = \left( \delta + \rho_\delta * \1_{\Omega_c} \right)^{-1} \,,
\end{equation}
where $\1_{\Omega_c}$ denotes the indicator function of the admissible region, the small parameter $\delta>0$
measures the softness of the obstacle, and $\rho_\delta$ is a positive regularization kernel, approximating the
Delta-distribution as $\delta\to 0$. The formal limit of $E_{\text{obst},\delta}$ as $\delta\to 0$ takes the value $1$ when the
constraint \eqref{eq:hard space constraint} is satisfied, and the value infinity when it is violated.


If only the three previous energy contributions are considered, we would expect a relaxation to an equilibrium.
Movement requires an active component, coming from actin polymerization and depolymerization in the cortex.
We assume that cell polarization manifests itself by local imbalances of this process producing a net increase of
actin close to the cell front and a decrease close to the rear of the cell. For simplicity we make the equilibrium
assumption that the total amount of actin in the cortex does not change.

We introduce the arclength $l$, which is given by
\begin{equation}\label{s2l}
l(s,t) = \int_0^s |\partial_s X(\sigma,t)|d\sigma \,.
\end{equation}
This relation between the arc length $l$ and the Lagrangian variable $s$ can be inverted in terms of the actin density $\rho(l,t)$ per arc length:
\begin{equation*}
   s(l,t) = \int_0^l \rho(\lambda,t)d\lambda \,,
\end{equation*}
implying $|\partial_s X| = \rho^{-1}$.
We denote by $f(l,t)$ the rate of actin increase ($f>0$) or decrease ($f<0$) per time unit and per arc length, so that $\rho$ satisfies
\begin{equation*}
    \partial_t \rho = f\,.
\end{equation*}
The above mentioned equilibrium assumption translates to
\begin{equation}
   \int_{\Gamma} f\,dl = \spint f(l(s,t))|\partial_s X(s,t)|ds = 0 \,,\qquad t\ge 0\,.
\end{equation}
We then obtain the material derivative for functions of $s$:
\begin{equation}
    \frac{D}{Dt} = \partial_t + \left(\int_0^{l(s,t)} f(\lambda,t)d\lambda\right) \partial_s\,,
\end{equation}
which has to be understood relative to the arc length along $\Gamma$, measured from the point $X(0,t)$. In particular,
the velocity of the cortex relative to the laboratory coordinates is given by $DX/Dt$.

Whereas $f$ is the actin growth density w.r.t. arclength, we need a description in terms of the variable $s$.
\begin{equation}
  g(s,t) = f(l(s,t),t) |\partial_s X(s,t)| \,,
\end{equation}
This roughly gives growth and decay rates per actin filament. This leads to
\begin{equation}
    \frac{D}{Dt} = \partial_t + \left(\int_0^s g(\sigma,t)d\sigma\right) \partial_s\,,
\end{equation}

Friction between the cell surface and the surrounding fluid, which is assumed nonmoving, is modelled by
the $L^2$-gradient flow for the total energy with the contributions \eqref{Espring}, \eqref{Epressure}, and \eqref{Eobst},
where the friction force is given in terms of the material derivative:
\begin{eqnarray*}
  \frac{DX}{Dt} &=& -\Eel'(X) - E_\text{p}'(X) - E_{\text{obst},\delta}'(X) \\
  &=& \partial_s \left((|\partial_s X|-1)_+ \frac{\partial_s X}{|\partial_s X|}\right) - p\,\partial_s X^\bot - \nabla W_\delta(X) \,.
\end{eqnarray*}
Note that the friction coefficient has been eliminated by an appropriate choice of the time scale.
This is however not yet the final form of the model. It would predict movement of freely floating cells in the absence of any
confinement, as can easily be seen by integrating the equation in the absence of the last term with respect to $s$:
$$
  \frac{d}{dt} \spint X\,ds = - \spint \left(\partial_s X \int_0^s g(\sigma,t)d\sigma \right) ds
  = \spint Xg\, ds \,.
$$
Since the right hand side will in general be different from zero, the center of mass will move. An explanation is the force,
used to move depolymerized G-actin from regions with $g<0$ to regions with $g>0$. We do not describe the corresponding mechanism further,
which could include diffusion or transport by molecular motors, for example.
By the action-reaction principle it
seems reasonable to introduce a compensating counterforce with density $\Fcomp(s,t)$, acting on the cortex:
\begin{equation}\label{full-equ}
  \partial_t X  + \partial_s X \int_0^s g\,d\sigma
  = \partial_s \left((|\partial_s X|-1)_+ \frac{\partial_s X}{|\partial_s X|}\right) - p\,\partial_s X^\bot - \nabla W_\delta(X)
  + \Fcomp\,.
\end{equation}
It can be chosen arbitrarily, except that the total force is fixed:
\begin{equation}
  \spint \Fcomp \,ds = - \spint Xg \,ds  \,.
  \label{eq:compensating force conservation}
\end{equation}
We do not make a choice for $\Fcomp$ at this point, the precise choice made for
our numerical experiments will be discussed in Section~\ref{sec:numerical results}.
One can note that $\Fcomp$ is roughly directed from the front towards to back of the cell,
since $g$ is negative (respectively positive) where the depolymerization (respectively polymerization).

\section{Existence results}
\label{sec:analytical results}

\subsection{Formulation of the main results}

We shall prove a global existence result for the initial value problem
for \eqref{full-equ} and a convergence result for the limit $\delta\to 0$ of hard channel walls, \emph{i.e.} strict enforcement of the constraint \eqref{eq:hard space constraint}.

Since the terms describing the cortical flow are chosen to have a vanishing integral
with respect to $s$, we introduce $\Sflow$, which is chosen such that
\begin{equation*}
    \partial_s \Sflow[X] = \partial_s X \int_0^s g \; d\sigma - \Fcomp\,.
\end{equation*}
The initial value problem can then be written in the form
\begin{eqnarray}\label{full-equ1}
  && \partial_t X  + \partial_s (\Sflow[X])
  = \partial_s \left((|\partial_s X|-1)_+ \frac{\partial_s X}{|\partial_s X|}\right) - p\,\partial_s X^\bot - \nabla W_\delta(X) \,,\\
  && X(s,0) = X^0(s) \,. \label{IC}
\end{eqnarray}
For the function spaces, we use the abbreviations $\L := L^2(\T)^2$ and $\H := H^1(\T)^2$ with the norms $\|\cdot\|_2$ and $\|\cdot\|_{1,2}$, respectively.
The torus $\T$ is represented by the $s$-interval $(0,1)$.
For the problem parameters we shall use the following assumptions:

\begin{enumerate}[label=(A\arabic*), ref=(A\arabic*)]
    \item \label{ass:pressure} $p < 2\pi$,
    \item \label{ass:cortex flow} $\|\Sflow[X]\|_2 \le \cflow\|X\|_2$, \quad $\|\partial_s (\Sflow[X])\|_2 \le \cflow\|X\|_{1,2}$.
    \item \label{ass:potential} $W_\delta$ is given by \eqref{Eobst}, where the domain $\Omega_c\subset\mathbb{R}^2$ has a smooth boundary,
and  $x\cdot\nabla W_\delta(x)\ge 0$ for $x\in\mathbb{R}^2$ and for $\delta$ small enough.
    \item \label{ass:initial data} $X^0\in \H$ and $\{X^0(s):\, s\in (0,1)\} \subset\Omega_c$.
\end{enumerate}

Assumption \ref{ass:pressure} can be motivated by looking at the simplified problem without cortical flow and without obstacles,
i.e. $\nabla W_\delta = \Sflow =0$. In this case the expected circular equilibrium state $X_{equ}$
exists only under Assumption \ref{ass:pressure} and is given by
\begin{equation*}
   X_\text{equ}(s) = \frac{1}{2\pi-p} \bigl(\cos(2\pi s), \sin(2\pi s)\bigr) \,.
\end{equation*}
Assumption \ref{ass:cortex flow} is somewhat restrictive compared to the models discussed in Section \ref{sec:model}. It can be satisfied for bounded actin growth rate $g$. The inequality in Assumption \ref{ass:potential} means roughly
that the allowed domain $\Omega_c$ is star shaped (and that its center has been taken as the origin). It is a technical assumption which in general precludes the type of channels used experimentally.
Finally, Assumption \ref{ass:initial data} on the initial cortex shape implies that the elastic and pressure energies are finite and that the
cell lies in the admissible region. It might be noted that we do not assume that $X_0$ parametrizes a simple curve.
We do not refer to this property since our results do not guarantee that it is preserved globally in time.

\begin{theorem}(Global existence for the penalized problem)
    \label{thm:existence penalized}
    Let the Assumptions \ref{ass:pressure}--\ref{ass:initial data} hold and let $\delta > 0$ be small enough.
    Then there exists a solution $X_\delta$ of \eqref{full-equ1}, \eqref{IC}, such that
    \begin{equation*}
       X_\delta \in H_\mathrm{loc}^1(\mathbb{R}_+; \L) \cap L_\mathrm{loc}^\infty (\mathbb{R}_+; \H) \,,
    \end{equation*}
    uniformly with respect to $\delta\to 0$.
\end{theorem}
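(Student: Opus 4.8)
The plan is to construct $X_\delta$ via a time-discrete minimizing movement scheme (the De Giorgi / JKO approach), exploiting that the right-hand side of \eqref{full-equ1} is, apart from the cortical flow term $\partial_s(\Sflow[X])$, the $L^2$-gradient of the total energy $E_\delta := \Eel + \Ep + E_{\text{obst},\delta}$. Fix a time step $h>0$ and, given $X^k_\delta$, define $X^{k+1}_\delta$ as a minimizer of
\begin{equation*}
  X \mapsto \frac{1}{2h}\Ltwo{X - X^k_\delta}^2 + E_\delta(X) + \langle \text{(flow term)}, X\rangle,
\end{equation*}
where the cortical flow contributes a bounded linear (in fact affine, frozen at $X^k_\delta$) perturbation. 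First I would check that this functional is well-defined, coercive, and weakly lower semicontinuous on $\H$: coercivity is the place where Assumption \ref{ass:pressure} is essential, since $\Ep$ is not bounded below and one must control $-p|\Omega(t)|$ by the elastic energy plus the quadratic penalization; the isoperimetric-type bound hidden in $p<2\pi$ (visible in the equilibrium radius $1/(2\pi-p)$) is what makes the sum bounded below. Weak lower semicontinuity of $\Eel$ uses convexity of $r\mapsto (r-1)_+^2$, of $E_{\text{obst},\delta}$ the continuity/convexity of $W_\delta$ (for $\delta$ small, using \ref{ass:potential}), and $\Ep$ is weakly continuous on $\H$ by compact embedding into $C^0$. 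Existence of a minimizer at each step then follows by the direct method.

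Next I would derive the discrete energy estimates. Testing the Euler–Lagrange equation for $X^{k+1}_\delta$ with $(X^{k+1}_\delta - X^k_\delta)/h$ gives the standard one-step inequality
\begin{equation*}
  \frac{1}{h}\Ltwo{X^{k+1}_\delta - X^k_\delta}^2 + E_\delta(X^{k+1}_\delta) \le E_\delta(X^k_\delta) + \cflow \|X^k_\delta\|_2\,\frac{\Ltwo{X^{k+1}_\delta - X^k_\delta}}{?},
\end{equation*}
and, absorbing the flow contribution via Assumption \ref{ass:cortex flow} and Young's inequality, summing over $k$, and using a discrete Grönwall argument, one obtains: $E_\delta(X^k_\delta)$ bounded on finite time intervals, $\sum_k \Ltwo{X^{k+1}_\delta - X^k_\delta}^2/h$ bounded (hence the piecewise-affine interpolant is bounded in $H^1_{\mathrm{loc}}(\mathbb{R}_+;\L)$), and — crucially — from the bound on $E_\delta$ together with Assumption \ref{ass:pressure} one recovers a bound on $\semi{X^k_\delta}$, i.e. boundedness in $L^\infty_{\mathrm{loc}}(\mathbb{R}_+;\H)$. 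All of these bounds must be tracked to be uniform in $\delta$; here Assumption \ref{ass:potential} ($x\cdot\nabla W_\delta(x)\ge 0$) enters to prevent the obstacle energy from blowing up and to give the $\delta$-uniform control, probably via an additional estimate obtained by testing with $X^{k+1}_\delta$ itself and discarding the good sign of the obstacle term.

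Then I would pass to the limit $h\to 0$. With the interpolants bounded in $H^1_{\mathrm{loc}}(\mathbb{R}_+;\L)\cap L^\infty_{\mathrm{loc}}(\mathbb{R}_+;\H)$, Aubin–Lions gives strong convergence in $C_{\mathrm{loc}}(\mathbb{R}_+;\L)$ and, after extracting a subsequence, weak-$*$ convergence in the stated spaces; the strong $L^2$ convergence plus the compact embedding $\H\hookrightarrow C^0$ upgrades this to uniform convergence of the curves, which handles the nonlinear terms $\Ep'$ and $\nabla W_\delta(X)$. The genuinely delicate term is the degenerate quasilinear elastic operator $\partial_s\big((|\partial_s X|-1)_+\,\partial_s X/|\partial_s X|\big)$: I expect to identify its limit by a monotonicity (Minty–Browder) argument, using that $q\mapsto (|q|-1)_+ q/|q|$ is the subdifferential of the convex function $q\mapsto \tfrac12(|q|-1)_+^2$, so that the associated operator on $\L$ is monotone, and combining $\liminf$ of the energy with the energy-dissipation inequality to close the argument. \textbf{This is the main obstacle}: reconciling the lack of strong $\partial_s X$-compactness with the nonlinearity, and doing so while simultaneously keeping every estimate uniform in $\delta$ so that the same compactness argument can be re-run for the limit $\delta\to 0$. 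A secondary technical point is verifying that the star-shapedness Assumption \ref{ass:potential} really yields the $\delta$-uniform bound on $E_{\text{obst},\delta}(X_\delta)$ needed to keep the cell inside $\Omega_c$ in the limit; I would handle this by the radial testing estimate mentioned above, which shows the obstacle penalty cannot force the curve outward.
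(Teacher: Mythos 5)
Your proposal follows essentially the same route as the paper: a semi-implicit minimizing-movement scheme with the cortical flow term frozen at the previous step, coercivity of the total energy from $p<2\pi$ via the isoperimetric inequality, a discrete Gr\"onwall argument (with the good sign of $X\cdot\nabla W_\delta(X)$ from Assumption \ref{ass:potential} discarded exactly as you guessed) yielding the $\delta$- and $\tau$-uniform bounds, and identification of the limit of the degenerate elastic operator by the Minty monotonicity trick. The only cosmetic difference is that the paper closes the Minty argument simply by testing against $X_\tau^{\mathrm{new}}$ and using its strong $L^2$ convergence, rather than an energy--dissipation inequality.
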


\begin{remark}\label{rem1}
    By the Morrey inequality proved in the appendix, $X_\delta$ is H\"older continuous with exponent $\frac{1}{2}$ in terms of $s$
    and $\frac{1}{4}$ in terms of $t$, again uniformly in $\delta$.
\end{remark}

The proof will be carried out in the following three sections. It relies on methods for gradient flows, although they cannot
be applied in a straightforward way. The terms on the right hand side of \eqref{full-equ1} are the $L^2$-gradients of the
energy functionals $\Eel$, $E_\text{p}$, and $E_{\text{obst},\delta}$ (see Section 3), the first of which is convex. The second
and third are treated as continuous perturbations. The cortical flow term on the left hand side is nonvariational.

Our approach is based on a semi-implicit time discretization, where the cortical flow term is evaluated at the old time
step. This allows to solve the discrete problem by energy minimization. A priori estimates for the discrete solution
allow to pass to the continuous limit.


These estimates are also uniform in the penalization parameter $\delta$ for the potential,
so we can also carry out the high penalization limit:
\begin{theorem}(Limit of hard channel walls)
    \label{col:existence constrained}
    With the assumptions of Theorem \ref{thm:existence penalized}, the family $\{X_\delta:\,\delta>0\}$ of solutions
    of \eqref{full-equ1}, \eqref{IC} contains a sequence, converging (as $\delta\to 0$) uniformly on bounded time intervals
    to
       \begin{equation*}
       X \in H_\mathrm{loc}^1(\mathbb{R}_+; \L) \cap L_\mathrm{loc}^\infty (\mathbb{R}_+; \H) \,,
    \end{equation*}
    which satisfies $X\in \overline{\Omega_c}$ on $\mathbb{R}_+\times\T$, \eqref{IC}, and
    \begin{equation}\label{equ:interior}
    \partial_t X  + \partial_s (\Sflow[X])
         = \partial_s \left((|\partial_s X|-1)_+ \frac{\partial_s X}{|\partial_s X|}\right) - p\,\partial_s X^\bot \,,
      \end{equation}
      for all $(s,t)$ such that $X(s,t)\in \Omega_c$.
  \end{theorem}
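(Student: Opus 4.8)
The plan is to take the family $\{X_\delta\}$ produced by Theorem~\ref{thm:existence penalized}, extract a subsequence converging to a limit $X$, and pass to the limit in \eqref{full-equ1}, exploiting that on interior sets the penalization term becomes negligible and that the remaining elastic nonlinearity is monotone.

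\emph{Step 1 (compactness).} By Theorem~\ref{thm:existence penalized} the family $\{X_\delta\}$ is bounded in $H^1_{\mathrm{loc}}(\mathbb{R}_+;\L)\cap L^\infty_{\mathrm{loc}}(\mathbb{R}_+;\H)$ uniformly in $\delta$, and by Remark~\ref{rem1} it is equi-Hölder ($\tfrac12$ in $s$, $\tfrac14$ in $t$) on bounded time intervals. Arzelà–Ascoli then yields a sequence $\delta_k\to0$ and a limit $X$ with $X_{\delta_k}\to X$ uniformly on each $[0,T]\times\T$; along a further subsequence the uniform bounds give $\partial_s X_{\delta_k}\rightharpoonup\partial_s X$ and $\partial_t X_{\delta_k}\rightharpoonup\partial_t X$ weakly in $L^2((0,T)\times\T)$, so $X$ lies in the asserted spaces, and $X_{\delta_k}(\cdot,0)=X^0$ passes to \eqref{IC}.

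\emph{Step 2 (constraint and interior sets).} From the a~priori estimates underlying Theorem~\ref{thm:existence penalized} one has a bound on the obstacle energy $\iint_{[0,T]\times\T} W_\delta(X_\delta)$ uniform in $\delta$. If $X(s_0,t_0)\notin\overline{\Omega_c}$, uniform convergence keeps $X_{\delta_k}$ at positive distance from $\overline{\Omega_c}$ on a space–time neighbourhood, on which $\rho_{\delta_k}*\1_{\Omega_c}\to0$ and hence $W_{\delta_k}(X_{\delta_k})\to+\infty$ uniformly; this contradicts the bound, so $X\in\overline{\Omega_c}$ on $\mathbb{R}_+\times\T$. For a point with $X(s_0,t_0)\in\Omega_c$, uniform convergence furnishes a neighbourhood $U$ and an open set $\Omega'\Subset\Omega_c$ with $X_{\delta_k}(U)\subset\Omega'$ for $k$ large; since $\rho_\delta$ concentrates at the origin, $\rho_\delta*\1_{\Omega_c}\to1$ uniformly on $\Omega'$ (identically $1$ if $\rho_\delta$ has shrinking support), so $\nabla W_{\delta_k}(X_{\delta_k})\to0$ uniformly on $U$, and $X_{\delta_k}$ solves \eqref{full-equ1} on $U$ up to a term tending to $0$.

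\emph{Step 3 (passing to the limit; the hard part).} On $U$ I would test the equation against $\varphi\,(X_{\delta_k}-X)$ with $0\le\varphi\in C^\infty_c(U)$. The terms involving $\partial_t X_{\delta_k}$, $p\,\partial_s X_{\delta_k}^\bot$ and $\partial_s\Sflow[X_{\delta_k}]$ are bounded in $L^2(U)$ (the last by Assumption~\ref{ass:cortex flow}) while $\varphi\,(X_{\delta_k}-X)\to0$ in $L^2(U)$, so their contributions vanish, as does the one from $\nabla W_{\delta_k}(X_{\delta_k})$; after an integration by parts and using $(\partial_s\varphi)(X_{\delta_k}-X)\to0$ in $L^2(U)$, this leaves
\begin{equation*}
  \iint_U\varphi\,A(\partial_s X_{\delta_k})\cdot\partial_s(X_{\delta_k}-X)\,ds\,dt\longrightarrow0\,,\qquad A(q):=(|q|-1)_+\,\tfrac{q}{|q|}=\nabla_q\bigl(\tfrac12(|q|-1)_+^2\bigr)\,.
\end{equation*}
Since $A$ is monotone, I would run a Minty–Browder argument: letting $\chi$ be a weak $L^2(U)$ limit of a subsequence of $A(\partial_s X_{\delta_k})$, the display forces $\iint_U\varphi\,A(\partial_s X_{\delta_k})\cdot\partial_s X_{\delta_k}\to\iint_U\varphi\,\chi\cdot\partial_s X$; testing monotonicity against $w=\partial_s X-\lambda z$ and letting $\lambda\downarrow0$ (hemicontinuity of $A$) yields $\chi=A(\partial_s X)$ on $\{\varphi>0\}$, hence $A(\partial_s X_{\delta_k})\rightharpoonup A(\partial_s X)$ locally in $U$ and $\partial_s\bigl(A(\partial_s X_{\delta_k})\bigr)\to\partial_s\bigl(A(\partial_s X)\bigr)$ in $\mathcal{D}'(U)$. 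Combined with the convergences of the remaining terms — the cortical-flow term being identified as $\partial_s\Sflow[X]$ exactly as in the proof of Theorem~\ref{thm:existence penalized} — this gives \eqref{equ:interior} in $\mathcal{D}'(U)$, and since $(s_0,t_0)$ was an arbitrary point of $\{X\in\Omega_c\}$, on all of that set.

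\emph{Main obstacle.} The decisive difficulty is the nonlinear elastic term in Step~3: only weak $L^2$ convergence of $\partial_s X_{\delta_k}$ is at hand, and because $A$ is flat on $\{|q|\le1\}$ one cannot expect $\partial_s X_{\delta_k}$ to converge strongly, so the nonlinearity must be handled by monotonicity — the equation supplies the limit $\iint_U\varphi\,A(\partial_s X_{\delta_k})\cdot\partial_s(X_{\delta_k}-X)\to0$, and Minty–Browder does the rest. The subsidiary points are the uniform obstacle-energy bound (needed for $X\in\overline{\Omega_c}$) and the localization of the penalization, both of which rest on the concentration of the kernel $\rho_\delta$.
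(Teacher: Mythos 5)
Your proposal is correct and follows essentially the same route as the paper: uniform convergence via the Hölder bounds of Remark~\ref{rem1}, the uniform obstacle-energy bound to obtain $X\in\overline{\Omega_c}$, localization with test functions supported where $X\in\Omega_c$ (so that $\nabla W_{\delta}\to 0$), and the Minty monotonicity trick for the elastic term, exactly as invoked in the proof of Theorem~\ref{thm:existence penalized}. Your write-up is in fact more detailed than the paper's, which merely states that the convergence of the terms "is handled exactly as for the continuous limit."
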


The properties of $X$ stated in the theorem are not a complete formulation of the obstacle problem. Information
on the behavior at the edges of contact regions is missing. Since the equation is degenerately parabolic,
this is not so obvious. Under the additional assumption of convexity of the permissible set $\Omega_c$, one could
expect that $X$ solves the variational inequality
    \begin{equation*}
    \left\langle \partial_t X  + \partial_s (\Sflow[X]) - \partial_s \left((|\partial_s X|-1)_+ \frac{\partial_s X}{|\partial_s X|}\right)
     + p\,\partial_s X^\bot, Y-X\right\rangle_\L \ge 0 \,,
      \end{equation*}
for all $Y\in\H$ such that $Y(s)\in \overline{\Omega_c}$, $s\in\T$.

For the numerical experiments, the obstacle is not modelled by a potential. For any $X(s, t) \in \Gamma(t)$, the total resulting force is rather
restricted to the tangent cone to $\Omega_c$ at $X(s, t)$. In other words, our numerical simulations will be based on the
formulation
\begin{equation}\label{equ:proj}
    \partial_t X(s, t)  = \mathbf{P}_c\left(\partial_s \left((|\partial_s X|-1)_+ \frac{\partial_s X}{|\partial_s X|}\right)
      - p\,\partial_s X^\bot - \partial_s (\Sflow[X]) \right)\,,
\end{equation}
where the potential $W_\delta$ does not appear. The $\mathbf{P}_c$ is the projection on the tangent cone:
\begin{equation}\label{def:proj}
    \mathbf{P}_c\left({F}(s,t)\right) = \begin{cases}
          ({F}(s,t)\cdot \tau_c) \tau_c &
             \text{ for } X\in \partial\Omega_c \,, \, {F}(s,t)\cdot n_c > 0 \,,
             \\
  {F}(s,t) & \text{ otherwise} \,,
    \end{cases}
\end{equation}
where $\tau_c$ and $n_c$ are a normalized tangent vector and the normalized outward normal
along $\partial\Omega_c$.

\subsection{The energy functional and its properties}

We introduce the total energy functional
\begin{eqnarray*}
    \Ed(X) &:=& \Eel(X) + \Ep(X) + E_{\text{obst},\delta}(X) \\
     &=&  \begin{cases} \spint \left(\frac{1}{2}\left(|\partial_s X| - 1\right)_+^2
 + \frac{p}{2} X\cdot \partial_s X^\perp +  W_\delta(X)\right)ds \quad& \text{if } X\in \H \\
        +\infty & \text{otherwise,}
              \end{cases}
\end{eqnarray*}
and prove its coercivity:

\begin{lemma}[Coercivity]\label{lem:coercivity}
    For $p < 2\pi$ and for all $X \in \L$
    \begin{equation}
        \label{eq:energy lower bound}
        \Ed(X) \ge \frac{1}{2}\left(1-\frac{p}{2\pi}\right) \|\partial_s X\|_2^2 - \|\partial_s X\|_2 \ge -\frac{\pi}{2\pi-p}\,.
    \end{equation}
\end{lemma}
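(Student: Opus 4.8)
The plan is to establish the lower bound \eqref{eq:energy lower bound} by controlling each of the three energy contributions separately. The obstacle term $E_{\text{obst},\delta}(X) = \spint W_\delta(X)\,ds$ is manifestly nonnegative, since $W_\delta = (\delta + \rho_\delta * \1_{\Omega_c})^{-1} > 0$ by construction, so it can simply be discarded from the lower bound. The elastic term $\Eel(X) = \frac12\spint (|\partial_s X|-1)_+^2\,ds$ is also nonnegative, but throwing it away entirely is too wasteful; instead I want to keep a fraction of it to absorb the (possibly negative) pressure term, and for that I need the elementary pointwise inequality $(a-1)_+^2 \ge a^2 - 2a$ valid for all $a \ge 0$ (check: for $a \le 1$ the left side is $0$ and $a^2 - 2a = a(a-2) \le 0$; for $a \ge 1$ both sides equal $a^2 - 2a + 1 \ge a^2 - 2a$). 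Applying this with $a = |\partial_s X(s)|$ and integrating gives $\Eel(X) \ge \frac12\|\partial_s X\|_2^2 - \spint |\partial_s X|\,ds \ge \frac12\|\partial_s X\|_2^2 - \|\partial_s X\|_2$, using Cauchy--Schwarz on $\T$ (which has length $1$) for the last step.

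Next I need to bound the pressure term $\Ep(X) = \frac{p}{2}\spint X\cdot\partial_s X^\perp\,ds$ from below. The key is the isoperimetric-type estimate $\left|\frac{p}{2}\spint X\cdot\partial_s X^\perp\,ds\right| = p\,|\Omega| \le \frac{p}{4\pi} L^2$, where $L = \spint |\partial_s X|\,ds = \|\partial_s X\|_{L^1}$ is the length of the curve; this is just the classical isoperimetric inequality $4\pi|\Omega| \le L^2$ applied to the (possibly non-simple, but this only helps) closed curve $\Gamma$. Then $L = \|\partial_s X\|_{L^1} \le \|\partial_s X\|_2$ again by Cauchy--Schwarz on the unit-length torus, so $\Ep(X) \ge -\frac{p}{4\pi}\|\partial_s X\|_2^2$. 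Combining the three pieces:
\[
\Ed(X) \;\ge\; \frac12\|\partial_s X\|_2^2 - \|\partial_s X\|_2 - \frac{p}{4\pi}\|\partial_s X\|_2^2 \;=\; \frac12\Bigl(1 - \frac{p}{2\pi}\Bigr)\|\partial_s X\|_2^2 - \|\partial_s X\|_2 \,,
\]
which is the first inequality of \eqref{eq:energy lower bound}. For the second inequality, I minimize the scalar function $t \mapsto \frac12(1-\frac{p}{2\pi})t^2 - t$ over $t \ge 0$: since $1 - \frac{p}{2\pi} > 0$ by Assumption \ref{ass:pressure}, the minimum is attained at $t_* = (1-\frac{p}{2\pi})^{-1}$ with value $-\frac12(1-\frac{p}{2\pi})^{-1} = -\frac{\pi}{2\pi - p}$. (One also notes that if $X \notin \H$ the functional is $+\infty$ and there is nothing to prove, so the restriction to $\H$ in the displayed formula for $\Ed$ is harmless; for $X \in \L \setminus \H$ the bound holds trivially.)

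The main — indeed the only non-routine — obstacle is making the isoperimetric step fully rigorous at the regularity level $X \in \H$: the classical isoperimetric inequality is usually stated for smooth or rectifiable Jordan curves, whereas here $X$ need only be $H^1$ and need not parametrize a simple curve. I expect this is handled either by a density argument (approximate $X$ in $\H$ by smooth curves, noting both $\spint X\cdot\partial_s X^\perp\,ds$ and $\|\partial_s X\|_2$ pass to the limit) together with the observation that self-intersections only decrease the net enclosed signed area relative to $\frac{1}{4\pi}L^2$, or alternatively by invoking a Sobolev/Wirtinger-type inequality directly: writing $X$ in Fourier series on $\T$, $\spint X\cdot\partial_s X^\perp\,ds$ is a sum $\sum_k 2\pi k\,(\text{quadratic in }\hat X_k)$ bounded by $\sum_k 2\pi |k|\,|\hat X_k|^2 \le \sum_k 2\pi k^2 |\hat X_k|^2 / (2\pi) \cdot 2\pi$... in any case $\le \frac{1}{2\pi}\|\partial_s X\|_2^2$ after accounting for the mean-zero part, which is exactly the Fourier-side incarnation of the same inequality and avoids any appeal to simplicity of the curve. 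Everything else is a direct computation.
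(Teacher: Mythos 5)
Your proof is correct and follows essentially the same route as the paper's: discard the nonnegative obstacle term, bound the pressure term via the isoperimetric inequality combined with Cauchy--Schwarz, and lower-bound the elastic term by completing the square (your pointwise inequality $(a-1)_+^2 \ge a^2-2a$ is exactly equivalent to the paper's subtraction of the $|\partial_s X|<1$ contribution). Your closing remark about justifying the isoperimetric step for non-simple $H^1$ curves via the Fourier (Hurwitz) form of the inequality addresses a point the paper silently glosses over, and is a worthwhile addition rather than a deviation.
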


\begin{proof}
W.l.o.g. we assume $X\in \H$ and denote by $\Omega$ the enclosed domain and by $L$
the length of its boundary. Then the isoperimetric and the Cauchy-Schwarz inequalities imply
    \begin{align*}
        \Ep(X) &= -p |\Omega| \ge - p \frac{L^2}{4\pi} = - \frac{p}{4\pi} \Vert \partial_s X \Vert_1^2
          \ge -\frac{p}{4\pi} \Vert \partial_s X \Vert_2^2\,.
    \end{align*}
For the elastic energy again the Cauchy-Schwarz inequality is used:
    \begin{align*}
        \Eel(X) &= \frac{1}{2} \spint (|\partial_s X|-1)^2ds - \frac{1}{2} \int_{|\partial_s X|<1} (|\partial_s X|-1)^2 ds
        \\
                &\geq \frac{1}{2} \left(\|\partial_s X\|^2_2 - 2 \|\partial_s X\|_1 + 1\right) - \frac{1}{2}
                \\
                &\geq \frac{1}{2} \|\partial_s X\|^2_2 - \|\partial_s X\|_2\,,
    \end{align*}
    which completes the proof, since $E_{\text{obst},\delta}\ge 0$.
\end{proof}

\begin{remark}
This result shows that the definition of $\Ed$ has been appropriate, since finiteness of $\Ed(X)$ implies $X \in \H$.
\end{remark}

\begin{lemma}
With respect to the weak topology in $\H$, the functional $\Eel$ is convex and lower semicontinuous,
and the functionals $\Ep$ and $E_{\text{obst},\delta}$ are continuous.
\end{lemma}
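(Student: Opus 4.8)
The plan is to handle the three functionals separately, using that $\Eel$ has a convex integrand in $\partial_s X$ while the other two are of lower order in a way that the compact embedding $H^1(\T)\hookrightarrow C(\T)$ renders continuous. For the \emph{convexity of $\Eel$}, write $\Eel(X)=\tfrac12\spint\phi(\partial_s X)\,ds$ with $\phi(v):=(|v|-1)_+^2=\psi(|v|)$, where $\psi(t):=(t-1)_+^2$. I would check that $\psi$ is convex and nondecreasing on $[0,\infty)$ and that $v\mapsto|v|$ is convex, so that the composition $\phi$ is convex on $\mathbb{R}^2$. Since $X\mapsto\partial_s X$ is linear, affine interpolation of $\partial_s X$ followed by integration of the pointwise convexity inequality gives convexity of $\Eel$ on $\H$.

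For the \emph{weak lower semicontinuity of $\Eel$}, I would deduce it from convexity together with \emph{strong} continuity on $\H$, invoking the standard fact that a convex, strongly lower semicontinuous functional is weakly sequentially lower semicontinuous (via Mazur's lemma applied to the sublevel sets). Strong continuity is a short estimate: $\phi\in C^1(\mathbb{R}^2)$ with $|\nabla\phi(v)|=2(|v|-1)_+\le 2|v|$, hence $|\phi(a)-\phi(b)|\le 2(|a|+|b|)\,|a-b|$ by the mean value theorem, and Cauchy--Schwarz yields $|\Eel(X_n)-\Eel(X)|\le(\|\partial_s X_n\|_2+\|\partial_s X\|_2)\,\|\partial_s X_n-\partial_s X\|_2$, which tends to $0$ if $X_n\to X$ in $\H$.

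For the \emph{continuity of $\Ep$ and $E_{\text{obst},\delta}$}, I would use that $\H=H^1(\T)^2$ embeds compactly into $C(\T)^2\subset\L$ (one-dimensionality of $\T$ is essential here): if $X_n\rightharpoonup X$ in $\H$, then $X_n\to X$ strongly in $\L$ and uniformly on $\T$, while $\partial_s X_n\rightharpoonup\partial_s X$ in $L^2$. For $\Ep(X)=\tfrac p2\spint X\cdot\partial_s X^\perp\,ds$, the integral pairs the strongly convergent factor $X_n$ with the weakly convergent factor $\partial_s X_n^\perp$, and such a product passes to the limit. For the obstacle term, $W_\delta=(\delta+\rho_\delta*\1_{\Omega_c})^{-1}$ is bounded (between $(1+\delta)^{-1}$ and $\delta^{-1}$) and globally Lipschitz on $\mathbb{R}^2$, since $\nabla W_\delta=-(\delta+\rho_\delta*\1_{\Omega_c})^{-2}(\nabla\rho_\delta*\1_{\Omega_c})$ is bounded by $\delta^{-2}\|\nabla\rho_\delta\|_{L^1}$; hence $W_\delta(X_n)\to W_\delta(X)$ uniformly on $\T$, so $E_{\text{obst},\delta}(X_n)\to E_{\text{obst},\delta}(X)$.

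None of these steps is deep; the one requiring care is the weak lower semicontinuity of $\Eel$. I want to be sure the convexity is genuinely used in the vector argument $v=\partial_s X\in\mathbb{R}^2$ and not merely in $|\partial_s X|$, and to keep track of the nonsmooth truncation $(\cdot)_+$ — which, conveniently, still leaves $\phi$ of class $C^1$ with the clean growth bound $|\nabla\phi|\le 2|v|$ used above. Everything else is a direct consequence of the compact embedding $H^1(\T)\hookrightarrow C(\T)$ and the elementary strong-weak product property in $L^2$.
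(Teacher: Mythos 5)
Your proof is correct, and its core is the same as the paper's: everything for $\Eel$ hinges on the integrand $(|v|-1)_+^2$ being a convex function of $v=\partial_s X$, which you verify cleanly as a nondecreasing convex function of the convex map $v\mapsto|v|$. The one place where your route genuinely differs is the weak lower semicontinuity step: the paper simply cites a general semicontinuity theorem for integral functionals with convex integrands (Giaquinta, Theorem~2.5), whereas you derive it from convexity plus \emph{strong} continuity of $\Eel$ on $\H$ (via the pointwise Lipschitz bound $|\nabla\phi(v)|\le 2|v|$) and then pass to weak lower semicontinuity through Mazur's lemma applied to the sublevel sets. This is more elementary and self-contained, and it is valid; the citation route is more general (it would also cover integrands with only measurability/Carath\'eodory regularity, where strong continuity of the functional is less immediate), but nothing of that generality is needed here. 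For the parts the paper dismisses as ``straightforward,'' your details are exactly the expected ones: the compact embedding $H^1(\T)\hookrightarrow C(\T)$ turns weak $\H$-convergence into uniform convergence, the strong--weak pairing handles the quadratic pressure term, and boundedness plus global Lipschitz continuity of $W_\delta$ (using $\delta+\rho_\delta*\1_{\Omega_c}\ge\delta$ and $|\nabla\rho_\delta*\1_{\Omega_c}|\le\|\nabla\rho_\delta\|_{L^1}$) handles the obstacle term. No gaps.
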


\begin{proof}
    The integrand of $\Eel$ is positive, and a convex function of $\partial_s X$. Lower semicontinuity follows from Giaquinta\cite{giaquinta_multiple_1983}, Theorem 2.5. The other properties are
straightforward.
\end{proof}

\subsection{Time discretization}

Choose $\tau>0$ and $X^{n-1}\in\H$. Then, by the results of the preceding section, the functional
\begin{equation*}
   \Phi(\tau,X^{n-1};Y) := \frac{\|Y-X^{n-1}\|_2^2}{2\tau} + \Ed(Y) + \langle Y, \partial_s(\Sflow[X^{n-1}]) \rangle_\L
\end{equation*}
is weakly lower semicontinuous on $\H$. It is also bounded from below since $\Ed$ is, and since
\begin{eqnarray*}
  && \frac{\|Y-X^{n-1}\|_2^2}{2\tau} + \langle Y, \partial_s(\Sflow(X^{n-1}) \rangle_\L
  \\
  &\ge& -\frac{\tau}{2}\| \partial_s(\Sflow[X^{n-1}])\|_2^2 - \|X^{n-1}\|_2 \| \partial_s(\Sflow[X^{n-1}])\|_2 \\
  &\ge&  -\left( \frac{\tau}{2}\cflow^2 + \cflow\right)\|X^{n-1}\|_{1,2}^2 \,.
\end{eqnarray*}
Furthermore, sublevel sets are bounded in $\H$.
This is sufficient for the minimum of $\Phi(\tau,X^{n-1};\cdot)$ to be assumed in $\H$, and we choose
\begin{equation*}
  X^n \in \argmin_{Y\in\H} \Phi(\tau,X^{n-1};Y) \,,\qquad n\ge 1\,.
\end{equation*}
We define $X_\tau\in C^{0,1}_\mathrm{loc}(\mathbb{R}_+,\H)$ as the piecewise linear interpolation of the $X^n$, $n\ge 0$. More precisely, we have
\begin{equation}
    X_\tau(t) := X^{k_t} + \frac{(t - k_t\tau)}{\tau} (X^{k_t+1} - X^{k_t})\,,
    \label{eq:definition x tau}
\end{equation}
where $k_t \in \mathbb{N}$ is such that $k_t \tau \le t < (k_t+1)\tau$.

\begin{lemma}[Uniform estimates for the interpolation]
    Let Assumptions \ref{ass:pressure}--\ref{ass:initial data} hold. Then
\begin{equation*}
   X_\tau \in H_\mathrm{loc}^1(\mathbb{R}_+,\L) \cap L_\mathrm{loc}^\infty(\mathbb{R}_+, \H) \,,
\end{equation*}
uniformly with respect to small enough $\tau$ and $\delta$.
\label{lem:interpolation uniform bounds}
\end{lemma}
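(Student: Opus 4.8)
The plan is to derive the discrete energy dissipation inequality satisfied by the minimizers $X^n$ and then sum it to obtain the claimed uniform bounds. First I would use the minimizing property $\Phi(\tau,X^{n-1};X^n)\le\Phi(\tau,X^{n-1};X^{n-1})$, which after cancelling $\Ed(X^{n-1})$ on both sides yields
\begin{equation*}
  \frac{\|X^n-X^{n-1}\|_2^2}{2\tau} + \Ed(X^n) - \Ed(X^{n-1})
  \le \langle X^{n-1}-X^n, \partial_s(\Sflow[X^{n-1}])\rangle_\L
  \le \frac{\|X^n-X^{n-1}\|_2^2}{4\tau} + \tau\,\cflow^2\|X^{n-1}\|_{1,2}^2\,,
\end{equation*}
using Assumption \ref{ass:cortex flow} and Young's inequality in the last step. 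This gives the basic one-step inequality
\begin{equation*}
  \frac{\|X^n-X^{n-1}\|_2^2}{4\tau} + \Ed(X^n) - \Ed(X^{n-1})
  \le \tau\,\cflow^2\|X^{n-1}\|_{1,2}^2\,.
\end{equation*}

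Next I would control the right-hand side by the energy. By Lemma \ref{lem:coercivity}, $\|\partial_s X^{n-1}\|_2^2 \le C(\Ed(X^{n-1}) + C')$ for constants depending only on $p$; together with a Poincaré-type bound (or by tracking $\|X_\tau\|_2$ separately using $\frac{d}{dt}\|X_\tau\|_2 \le \|\partial_t X_\tau\|_2$ and the step estimate) one bounds $\|X^{n-1}\|_{1,2}^2$ by $C(\Ed(X^{n-1}) + M_{n-1})$, where $M_{n-1}$ collects lower-order $L^2$ contributions that themselves grow at most geometrically. Summing the one-step inequality from $n=1$ to $n=N$, with $N\tau\le T$, and invoking a discrete Grönwall argument then gives
\begin{equation*}
  \Ed(X^N) + \frac{1}{4\tau}\sum_{n=1}^N \|X^n-X^{n-1}\|_2^2
  \le \bigl(\Ed(X^0) + C_T\bigr) e^{C_T}\,,
\end{equation*}
uniformly in $\tau$ small and in $\delta$ (the latter because $E_{\text{obst},\delta}\ge 0$ and because Assumption \ref{ass:initial data} makes $\Ed(X^0)$ finite uniformly in $\delta$, the obstacle term on $X^0$ being bounded by $\delta^{-1}\cdot\ldots$—here one uses that $X^0\subset\Omega_c$ so $\rho_\delta*\1_{\Omega_c}$ is bounded below near $X^0$, keeping $E_{\text{obst},\delta}(X^0)$ bounded).

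Finally I would translate the discrete bounds into the stated function-space membership for the interpolant. From the sum $\frac1\tau\sum\|X^n-X^{n-1}\|_2^2 \le C_T$ and the definition \eqref{eq:definition x tau}, on each subinterval $\partial_t X_\tau = \tau^{-1}(X^{n}-X^{n-1})$, so $\int_0^T\|\partial_t X_\tau\|_2^2\,dt = \tau^{-1}\sum\|X^n-X^{n-1}\|_2^2 \le C_T$, giving the $H^1_{\mathrm{loc}}(\mathbb{R}_+;\L)$ bound; and from $\sup_n\Ed(X^n)\le C_T$ with Lemma \ref{lem:coercivity} we get $\sup_n\|X^n\|_{1,2}\le C_T$, hence (since $X_\tau(t)$ is a convex combination of two consecutive $X^n$) the $L^\infty_{\mathrm{loc}}(\mathbb{R}_+;\H)$ bound. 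The main obstacle I anticipate is the bookkeeping in the Grönwall step: the flow term on the right is quadratic in the full $\H$-norm rather than merely in $\|\partial_s X\|_2$, so one must carry the $L^2$ part of the norm along—either by adding $\frac12\|X_\tau\|_2^2$ to the energy as a Lyapunov functional or by a separate geometric-growth estimate for $\|X^n\|_2$—and check that the resulting constants genuinely do not degenerate as $\tau\to 0$ or $\delta\to 0$.
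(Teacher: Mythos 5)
Your proposal is correct in outline and is built on the same central ingredient as the paper, namely the dissipation inequality obtained from $\Phi(\tau,X^{n-1};X^n)\le\Phi(\tau,X^{n-1};X^{n-1})$ followed by Young's inequality; but it closes the estimate by a genuinely different route. The paper first tests the discrete Euler--Lagrange equation \eqref{discr-var} with $Y=X^n$, using Assumption \ref{ass:potential} (star-shapedness, $x\cdot\nabla W_\delta(x)\ge 0$) to discard the obstacle term, and a discrete Gronwall argument then yields $\sup_{n\tau\le T}\|X^n\|_2\le C(T)$ together with $\tau\sum_{k\le n}\|\partial_s X^k\|_2^2\le C(n\tau)$; only afterwards is the dissipation inequality summed, its right-hand side being controlled by these already-established bounds with no further Gronwall step. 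You instead run Gronwall directly on the energy, absorbing $\|\partial_s X^{n-1}\|_2^2$ into $\Ed(X^{n-1})$ via the coercivity Lemma~\ref{lem:coercivity}. This is viable, and has the mild advantage of not needing Assumption~\ref{ass:potential} for this particular lemma; the price is precisely the bookkeeping you flag. Be aware that the ``Poincar\'e-type bound'' you mention first cannot work as stated: $\Eel$ and $\Ep$ are translation invariant and $E_{\text{obst},\delta}$ gives no growth in $|x|$, so $\Ed$ controls $\|\partial_s X\|_2$ but says nothing about the mean of $X$, hence nothing about $\|X\|_2$. Your fallback does work: combining $\|X^n\|_2\le\|X^{n-1}\|_2+\|X^n-X^{n-1}\|_2$ with the telescoping bound $\sum_n\|X^n-X^{n-1}\|_2^2\le 4\tau\bigl(\Ed(X^0)-\Ed(X^N)\bigr)+C\tau^2\sum_n(\cdots)$ and Cauchy--Schwarz in $n$ yields a coupled discrete Gronwall system for the pair $\bigl(\Ed(X^n),\|X^n\|_2^2\bigr)$ that closes on bounded time intervals, uniformly in $\tau$ and $\delta$ (the uniformity in $\delta$ of $\Ed(X^0)$ following, as you note, from $X^0(\T)$ being a compact subset of the open set $\Omega_c$). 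That coupled estimate is the one step of your plan that must actually be written out, since it is the only place where the argument could degenerate; the final translation of the discrete bounds to the interpolant $X_\tau$ is standard and matches the paper.
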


\begin{proof}
Denoting the duality bracket with $\langle \cdot, \cdot\rangle$, the variation of $\Phi$ in the direction $Y\in \H$ gives
\begin{equation}\label{discr-var}
  \frac{\langle X^n-X^{n-1}, Y\rangle_\L}{\tau} + \langle \partial \Ed(X^n), Y\rangle
  - \langle\partial_s Y , \Sflow[X^{n-1}]\rangle_\L = 0 \,,
\end{equation}
with the formal gradient of the energy determined from
\begin{eqnarray*}
  \langle \partial \Ed(X^n), Y\rangle_\L &=&
  \spint \left( (|\partial_s X^n| - 1)_+ \frac{\partial_s X^n\cdot \partial_s Y}{|\partial_s X^n|}
  + Y\cdot\bigl(p\partial_s (X^n)^\bot + \nabla W_\delta(X^n)\bigr)\right)ds
\end{eqnarray*}
With $Y=X^n$ and with Assumptions \ref{ass:pressure} and \ref{ass:potential} we get, similarly to the proof of Lemma \ref{lem:coercivity},
\begin{align*}
    \langle \partial \Ed(X^n), X^n\rangle_\L
    &\ge \spint (|\partial_s X^n| - 1)_+ |\partial_s X^n|ds - 2p|\Omega^n|
    \\
    &\ge \left( 1- \frac{p}{2\pi}\right)\|\partial_s X^n\|_2^2 - \|\partial_s X^n\|_2 - \frac{1}{4} \,,
\end{align*}
and we observe
\begin{equation*}
 \langle X^n-X^{n-1}, X^n\rangle_\L  \ge \frac{1}{2}\left(\|X^n\|_2^2 - \|X^{n-1}\|_2^2\right) \,.
\end{equation*}
Finally, we use
\begin{equation*}
 | \langle\partial_s X^n , \Sflow[X^{n-1}]\rangle_\L| \le \gamma \|\partial_s X^n\|_2^2
 + \frac{\cflow^2}{4\gamma}\|X^{n-1}\|_2^2
\end{equation*}
with $0<\gamma<1-p/(2\pi)$. This implies the existence of positive constants $A_1,A_2,A_3$ (independent from $n$, $\tau$
and $\delta$) such that
\begin{equation*}
  \|X^n\|_2^2 + \tau A_1 \|\partial_s X^n\|_2^2 \le (1+\tau A_2)\|X^{n-1}\|_2^2 + \tau A_3 \,.
\end{equation*}
A discrete Gronwall estimate now gives
\begin{equation*}
  \|X^n\|_2^2 \le \|X^0\|_2^2 \,e^{A_2 n\tau} + \frac{A_3}{A_2}\left(e^{A_2 n\tau} - 1\right) \,,
\end{equation*}
and, as a consequence,
\begin{equation}\label{discr-H1}
   \tau \sum_{k=1}^n \|\partial_s X^k\|_2^2 \le C(n\tau) \,.
\end{equation}
From the last two bounds we get for any $t_1, t_2 > 0$
\begin{equation*}
    \sup_{t \in [t_1, t_2]} \|X_\tau(t)\|^2 \le
  \|X^0\|_2^2 \,e^{A_2 k_2\tau} + \frac{A_3}{A_2}\left(e^{A_2 k_2\tau} - 1\right) \,,
\end{equation*}
and
\begin{equation*}
    \| \partial_s X_\tau \|_{L^2([t_1, t_2])}^2 \le \sum_{k=k_1}^{k_2}
    \tau \left(\| \partial_s X^k\|_2^2 + \|\partial_s X^{k+1}\|_2^2\right)
    \le 2 C(k_2 \tau)\,,
\end{equation*}
where $k_2 \tau \le t_2 < (k_2 + 1 )\tau$.
In other words, we have shown
$X_\tau \in L_\mathrm{loc}^\infty(\mathbb{R}_+,\L) \cap L_\mathrm{loc}^2(\mathbb{R}_+,\H)$ uniformly in $\tau$ and $\delta$.

From the minimization we get
\begin{multline*}
  \frac{\|X^n-X^{n-1}\|_2^2}{2\tau} + \Ed(X^n) + \langle X^n, \partial_s(\Sflow[X^{n-1}])\rangle_\L
  \\
  \le \Ed(X^{n-1}) + \langle X^{n-1}, \partial_s(\Sflow[X^{n-1}])\rangle_\L \,.
\end{multline*}
With the velocity $\partial_t X^n_\tau = \frac{X^n-X^{n-1}}{\tau}$ of the linear interpolant this reads
\begin{equation*}
  \frac{\tau}{2} \|\partial_t X^n_\tau\|_2^2 + \Ed(X^n)
  \le \Ed(X^{n-1}) - \tau\langle \partial_t X_\tau^n, \partial_s(\Sflow[X^{n-1}])\rangle_\L
\end{equation*}
With the Young inequality and Assumption \ref{ass:cortex flow} we obtain
\begin{equation*}
  \frac{\tau}{4} \|\partial_t X^n_\tau\|_2^2 + \Ed(X^n)
  \le \Ed(X^{n-1}) + \tau \cflow^2 \|\partial_s X^{n-1}\|_2^2
\end{equation*}
Using \eqref{discr-H1}, summation over $n$ and an application of Lemma \ref{lem:coercivity}
completes the proof.
\\
\end{proof}

\subsection{The continuous and high penalization limits}

Considering the time discrete solution $(X^n)_{n\ge 0}$, we recall the definition \eqref{eq:definition x tau} of $X_\tau$ and define two additional piecewise constant continuous-time approximations:
\begin{equation*}
    \begin{cases}
        X_\tau^\mathrm{old}(t,s) &:= X^{k_t}(s) \,,
        \\
        X_\tau^\mathrm{new}(t,s) &:= X^{k_t+1}(s) \,,
   \end{cases}
   \quad \text{ where } k_t \tau < t < (k_t+1)\tau \,,
\end{equation*}
Then \eqref{discr-var} implies
\begin{equation}\label{equ:tau}
  \partial_t X_\tau + \partial\Eel(X_\tau^\mathrm{new}) + p\partial_s (X_\tau^\mathrm{new})^\bot + \nabla W_\delta(X_\tau^\mathrm{new})
  + \partial_s \Sflow[X_\tau^\mathrm{old}] = 0 \,,
\end{equation}
where $\partial\Eel(X)$ is the subdifferential of the elastic energy, given by the distributional derivative with respect
to $s$ of $-(|\partial_s X|-1)_+ \frac{\partial_s X}{|\partial_s X|}$.
The equality only holds in the dual space of $\H$ \emph{a priori}, but thanks to Lemma~\ref{lem:interpolation uniform bounds}, it also holds in $\L$.
It is our goal to pass to the limit $\tau\to 0$ in this
equation. By the results of the previous section, there exists a sequence $\tau_k\to 0$ such that
\begin{equation*}
  \lim_{k\to\infty} X_{\tau_k} = \lim_{k\to\infty} X_{\tau_k}^\mathrm{new} = \lim_{k\to\infty} X_{\tau_k}^\mathrm{old} = X
  \qquad\mbox{in } L_\mathrm{loc}^2(\mathbb{R}_+,\L) \,.
\end{equation*}
We also have that all the terms in \eqref{equ:tau} except $\partial\Eel(X_\tau^\mathrm{new})$ are bounded in
$L_\mathrm{loc}^2(\mathbb{R}_+,\L)$ uniformly in $\tau$ and $\delta$. In all these terms we can pass to the limit in the
sense of distributions, but also weakly in $L_\mathrm{loc}^2(\mathbb{R}_+,\L)$.

This also implies weak convergence of $\partial\Eel(X_{\tau_k}^\mathrm{new})$ to some $\eta\in L_\mathrm{loc}^2(\mathbb{R}_+,\L)$,
and we obtain
\begin{equation*}
  \partial_t X + \eta + p \partial_s X^\bot + \nabla W_\delta(X) + \partial_s \Sflow[X] = 0 \,.
\end{equation*}
It remains to identify $\eta$.


Since the subdifferential is a maximal monotone operator, we shall apply the Minty trick
\cite[Theorem 2.2]{hungerbuhler_young_2000,minty_monotonicity_1963}.
Testing \eqref{equ:tau} against $X_\tau^\mathrm{new}$, its strong convergence immediately implies
\begin{equation*}
  \lim_{k\to\infty} \int_0^T \langle \partial\Eel(X_{\tau_k}^\mathrm{new}), X_{\tau_k}^\mathrm{new}\rangle_\L dt
    = \int_0^T \langle \eta,X\rangle_\L dt \,,
\end{equation*}
which is sufficient for $\eta\in \partial\Eel(X)$ and, thus,
\begin{equation*}
  \eta = -\partial_s\left((|\partial_s X|-1)_+ \frac{\partial_s X}{|\partial_s X|}\right) \,,
\end{equation*}
to be understood as the weak derivative of an $L^2$-function, since $X\in L_\mathrm{loc}^\infty(\mathbb{R}_+, \H)$.
This completes the proof of Theorem \ref{thm:existence penalized}.

For the proof of Theorem \ref{col:existence constrained}, we note that by the results of Theorem
\ref{thm:existence penalized} and following Remark \ref{rem1} the uniform convergence of a subsequence
to $X$ is immediate. The uniform boundedness of $E_{\text{obst},\delta}$ implies $X\in\overline{\Omega_c}$ by its
continuity and by the uniform convergence, since $W_\delta(x)\to\infty$ for $x\notin \overline{\Omega_c}$.

Similarly, since $\nabla W_\delta(x)\to 0$ for $x\in\Omega_c$, the weak formulation of \eqref{equ:interior} can be derived
by using test functions vanishing away from $\{(s,t):\, X(s,t) \in \Omega_c\}$.
The convergence of the various terms is handled exactly as for the continuous limit.

\section{Numerical results}
\label{sec:numerical results}

\subsection{Discretization}

We start with the situation without obstacles and
introduce $N\in\mathbb{N}$ grid-points for the discretization of $s$ such that $s_i = i\Delta s$,
$\Delta s =\frac{1}{N}$ for $i$ considered on a discrete torus, meaning that $i$ is identified with $i+N$.
The time step is $\Delta t>0$ and $t^n = n\Delta t$, $n\ge 0$. The numerical approximation for $X(s_i,t^n)$ is
denoted by $X_i^n$.

We assume a cell which is polarized in the fixed direction $\omega\in\mathbb{R}^2$, $|\omega|=1$ and define
$i_0^n$ and $i_1^n$ such that
\begin{equation}\label{omega}
  \omega\cdot X_{i_0^n}^n = \max_i \omega\cdot X_i^n \,,\qquad
  \omega\cdot X_{i_1^n}^n = \min_i \omega\cdot X_i^n \,.
\end{equation}
Actin is added to the cortex at the leading end $X_{i_0^n}^n$ and removed at the trailing end $X_{i_1^n}^n$, corresponding
to $s_0(t^n) = i_0^n\Delta s$, $s_1(t^n) = i_1^n\Delta s$ in the notation of Section \ref{sec:model}.

We use the explicit Euler scheme for the time discretization and symmetric finite differences for the discretization
in the $s$-direction:
\begin{multline*}
  \frac{X_i^{n+1} - X_i^n}{\Delta t} = {\bf F}[X^n]_i :=
  \\
  v\, \1_{i_1\leq i \leq i_0} \frac{X^n_{i+1}-X^n_{i-1}}{2\Delta s} +
  \frac{G^n_{i+1/2} - G^n_{i-1/2}}{\Delta s} - p \frac{({X^n_{i+1}} - {X^n_{i-1}})^\perp}{2\Delta s}
  + F_{\mathrm{comp},i}^n\,,
\end{multline*}

where the elasticity forces are given by
\begin{equation}\label{Gi}
G_{i+1/2} = \left(\frac{|X_{i+1} - X_{i}|}{\Delta s} - 1 \right)_+\frac{X_{i+1} - X_i}{|X_{i+1} - X_i|} \,,
\end{equation}
and the compensating force by
$$
F_{\mathrm{comp},i}^n = \frac{v}{2} \left( \frac{X_{i_1^n-1}^n + X_{i_1^n}^n}{2} - \frac{X_{i_0^n+1}^n + X_{i_0^n}^n}{2}\right) \left(\tilde\delta_a(i-i_0^n) + \tilde\delta_a(i-i_1^n)\right) \,.
$$
Here $\tilde\delta_a(i)$ is a mollification of the Dirac Delta in $s_i$, with smoothing parameter $a$ such that $\sum_i \tilde\delta_a(i)\Delta s=1$.
The approximation for the compensating force has been chosen such that the discrete center of mass
$\sum_i X_i^n \Delta s$ is independent of time (in the absence of obstacles).


The restriction to the admissible domain $\Omega_c$ is enforced by an approximation of the formulation \eqref{equ:proj},
\eqref{def:proj}:
$$
  \frac{X_i^{n+1} - X_i^n}{\Delta t} = {\bf P}_{c,\epsilon}({\bf F}[X^n]_i ) \,,
$$
where
\begin{equation*}
  \mathbf{P}_{c,\epsilon}({\bf F}[X]_i) = \left\{ \begin{array}{ll} ({\bf F}[X]_i\cdot \vec{\bf t}) \vec{\bf t} &
             \mbox{for } {\rm dist}(X_i, \partial\Omega_c)<\epsilon \,, \, {\bf F}[X]_i\cdot\vec{\bf n} > 0 \,, \\
                          {\bf F}[X]_i & \mbox{else} \,, \end{array}\right.
\end{equation*}
with the tangent and normal vectors evaluated at the orthogonal projection of $X_i$ to $\partial\Omega_c$
(see Figure~\ref{SketchBoundary}). The width $\epsilon$ of the tube, where the projection is applied, is kept sufficiently large with respect to the time step such that the constraint \eqref{eq:hard space constraint} is met.

\begin{figure}[H]
\centering
\includegraphics[width=0.5\textwidth]{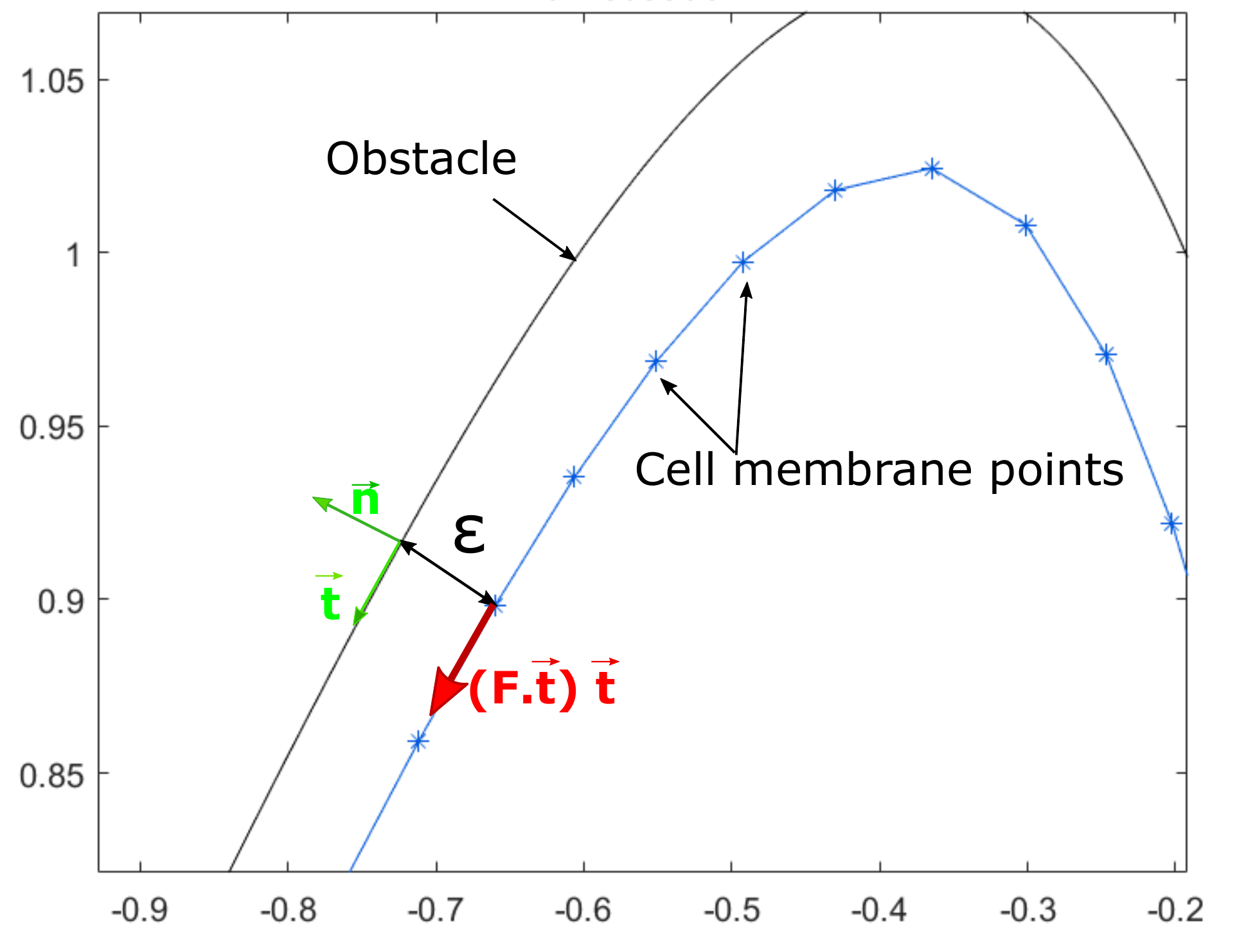}
\caption{Sketch of the projection of the forces acting on the cell cortex close to the walls.}
\label{SketchBoundary}
\end{figure}


\subsection{Dimensionalisation of the model parameters}

We denote by $\kappa_S$ the elasticity force and by $\mu$ the internal friction coefficient in front of the time derivative. Note that all these quantities have been set to 1 so far in the model, without loss of generality. We recover them here for the sake of computing their dimensions, Eq. \eqref{equ:proj} reads:
 \begin{equation}
     \label{eq:evolution projection all parameters}
   \mu \partial_t X  = \mathbf{P}_c\left(\kappa_S \partial_s \left((|\partial_s X|-1)_+ \frac{\partial_s X}{|\partial_s X|}\right)
      - p\,\partial_s X^\bot - \frac{1}{ \mu}\partial_s (\Sflow[X]) \right).
\end{equation}
We denote by $x_0$ and $t_0$ the space and time units of the model. The space unit is chosen to be half the cortex length of leucocytes $L = 2 x_0$, i.e $x_0 \approx 47.6 \mu m$. The polymerization speed in leukocytes is around $12 \mu m.min^{-1}$ and the dimensionless polymerization speed writes $v = 2 \frac{x_0}{t_0}$ therefore 1 time unit of the model corresponds to $t_0 \approx 8 min$. The elastic properties of the human red blood cell have been studied via micropipette aspiration experiments \cite{waugh_thermoelasticity_1979} where the authors show that the stiffness constant of leukocytes membrane is of order $k_S = 7 pN.\mu m^{-1}$. In our model $\frac{k_S}{\mu} = \frac{1}{t_0} = 0.1265 min^{-1}$ therefore we can deduce that the internal friction coefficient $\mu \approx 55 pN.\mu m^{-1}.min$. Finally, we choose the pressure constant $p = 3.2$. We deduce $\frac{p}{\mu} = 3.2 \frac{1}{t_0} = 0.4 min^{-1}$ and therefore $p \approx 22 pN.\mu m^{-1}$ which is in biological range. All the model parameters are summarized in Table~\ref{table1}.

\begin{table}[H]
    \begin{center}
\begin{tabular}{cccc}
    \toprule
    \multicolumn{4}{c}{Parameters}
    \\ \midrule
\hline
\multicolumn{4}{c}{Numerical parameters}\\
\hline
Symbol & \multicolumn{2}{c}{Numerical Value} & Description \\
\hline
$\Delta t$ & \multicolumn{2}{c}{$4. 10^{-2}$} & Time step \\
$\Delta s$ & \multicolumn{2}{c}{$5. 10^{-3}$} & Discretization step \\
$N$ & \multicolumn{2}{c}{200 } & Number of nodes\\
$\epsilon$ & \multicolumn{2}{c}{0.1} & Distance to obstacle for projection\\
$a$ & \multicolumn{2}{c}{adapted} & Length of the polymerization zone\\
\hline
\multicolumn{4}{c}{Model parameters}\\
\hline
Symbol & Numerical Value & Biological value & Description \\
\hline
$L$ & 1 & $48 \mu m$& Reference membrane length\\
$L_{\text{eff}}$ & 2 & 94 $\mu m$& Effective membrane length \\
$\kappa_S$ & 1 & 7 $pN.\mu m^{-1}$ & Stiffness constant \\
$p$ & $3.2$ & 22 $pN.\mu m^{-1}$& Pressure force \\
 $v$ & $2$ & 12 $\mu m.min^{-1}$& Polymerization speed \\
$\mu$ & 1 & 55 $pN.\mu m^{-1}.min$ &Internal friction coefficient \\ \bottomrule
 \end{tabular}
 \end{center}
 \caption{Numerical and model parameters for the simulations of the paper. \label{table1}}
\end{table}

\subsection{Simulations without Obstacle}
In Figure~\ref{NoObst} we show computed equilibrium shapes of the cortex in the absence of obstacles. The polarization
of the cell is always to the left (i.e. $\omega = (-1,0)$ in \eqref{omega}).

Figure~\ref{NoObst} (A) shows a simulation without compensating force. As expected, the cell migrates in the direction
of the leading end with a speed slightly less than the polymerization/depolymerization speed. The equilibrium shape
is a circle.

Figures~\ref{NoObst} (B,C,D) show the equilibrium shapes obtained for different values of $h = \frac{a}{L}$,
where we recall that $a$ measures the spread of the compensating force and $L$ is the equilibrium circumference.
The most important observation is that migration has been turned off successfully by including the compensating
force. The deviation of the equilibrium shape from a circle is stronger for more concentrated compensating forces (smaller values of $a$ therefore $h$, compare Figures (D) to (B)).

\begin{figure}[H]
\centering
\includegraphics[width=\textwidth]{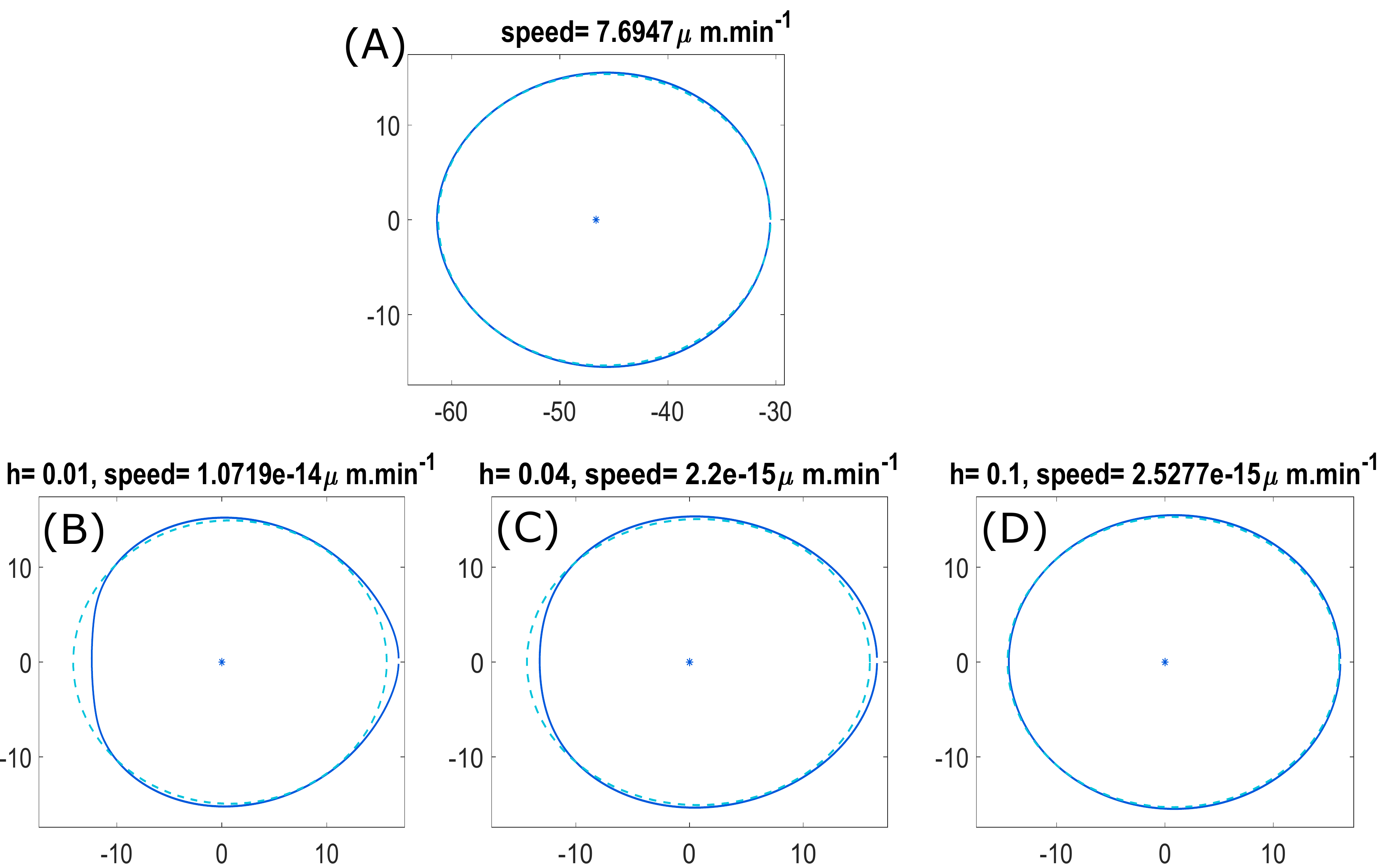}
\caption{Different cell shapes obtained for different values of $h$: (A) Without compensating force. (B,C,D, E) With compensating force for different spread (B) $h = 0.01$, (C) $h=0.04$, (D) $h=0.1$. Solid: cell cortex, dashed: circle with the same circumference. \label{NoObst}}
\end{figure}

\subsection{Simulations of Migration in Channels}

In order to reproduce the biological experiments, our protocol for simulations with channels is started by putting the cell with an initially circular shape close to the opening
of a channel. Then we let it evolve to an equilibrium shape as in the preceding section, after which we turn on a force pushing it into the channel. This is achieved by turning off the pressure force in a region around the
trailing end. The pushing is removed as soon as the cell is entirely inside the channel.

\begin{figure}[H]
\centering
\includegraphics[width=0.9\textwidth]{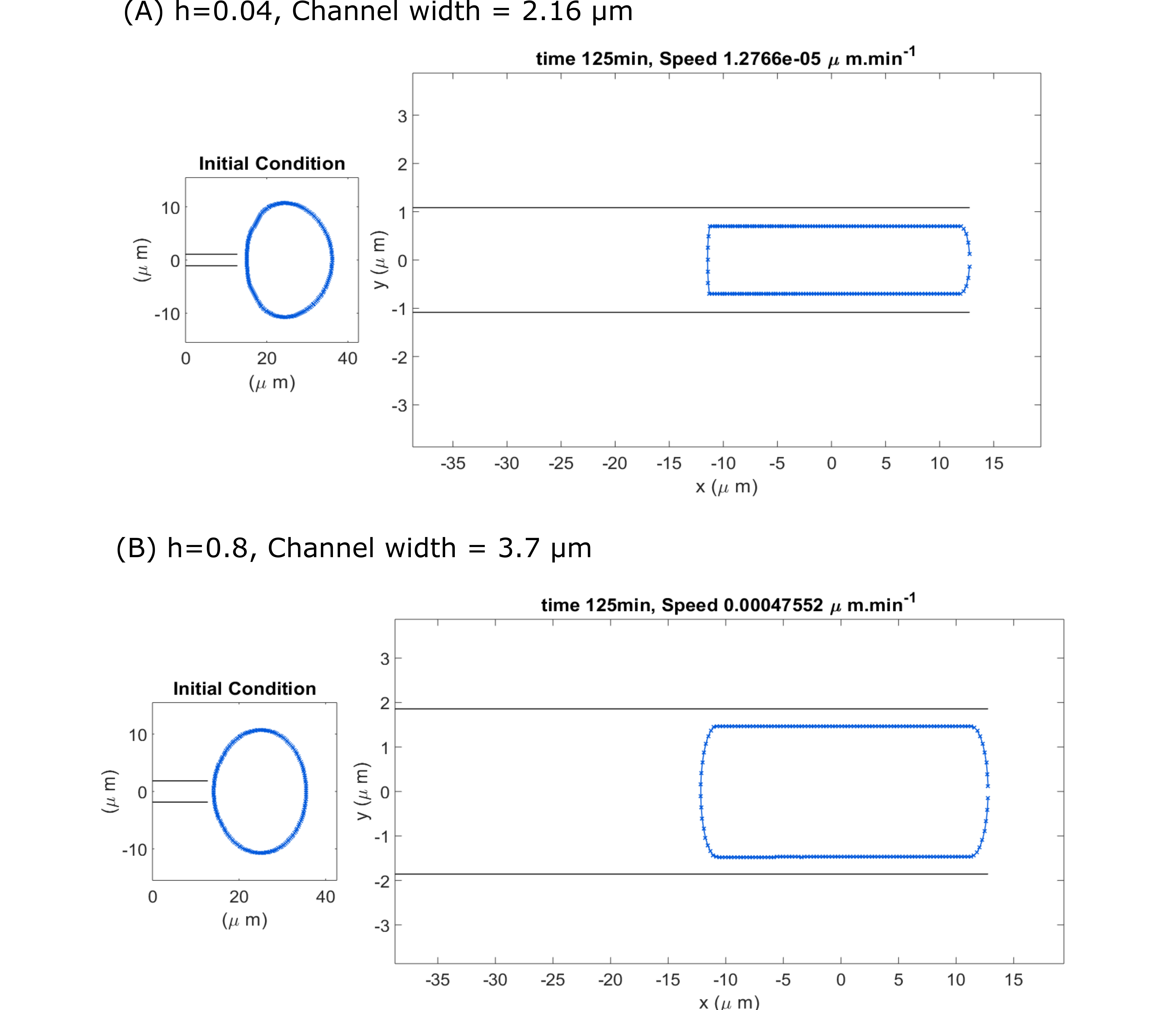}
\caption{Numerical simulations of cells pushed into channels with flat walls for $h=0.04$ and channel width $2.16\mu m$ (A) and for $h = 0.8$ and channel width $3.7 \mu m$ (B). For each simulation, we show on the left side the initial condition, and on the right side the final position. \label{FlatObst}}
\end{figure}

In channels with flat walls with varying widths and spreads of the compensating force (parameter $h$), no migration is observed in
the simulations in spite of polarization and corresponding cortex flow (Figure~\ref{FlatObst}). These results are in
agreement with the experiments described in Section \ref{sec:biological observations}.

Ratchet channels are described by four parameters: (i) a wave length $L_0$ of the width variations, (ii) a minimal width
$2 w_0$, (iii) an amplitude of the width variations $d_0$, and (iv) an asymmetry parameter $\alpha$.
For the length $x$ along the channel, the walls of the channel are given by $\pm f(x)$, with the function
$$
f(x) = d_0 (g(x)-1) - w_0 \,,
$$
where $g(x)$ solves the fixed point equation:
$$
   g(x) = \sin \left(\frac{2 \pi x}{L_0} + \alpha g(x)\right) \,.
$$
In all our numerical simulations we choose $\alpha=0.4$. Note that for $\alpha=0$, the walls would be sinusoidal functions
of $x$, i.e. symmetric with respect to the $y$ axis. As in the experiments, in each simulated channel
we combine three wavelengths, $L_0= 3.9, 7.6, 11.7 \mu m$, increasing in the direction of migration.
Figures~\ref{ratchets} (A,B) show two examples with $2w_0 = 1.36 \mu m$, $d_0 = 0.76 \mu m$ (A) and $2 w_0 = 3.7 \mu m$, $d_0 = 3 \mu m$ (B). As further illustration, Figure~\ref{ratchets} (C) shows a cell at equilibrium before being pushed into a ratchet with $L_0=3.3 \mu m$ and $7.6 \mu m$, $2w_0= 3.7\mu m$, and $d_0=1.54 \mu m$.

\begin{figure}[H]
\centering
\includegraphics[width=\textwidth]{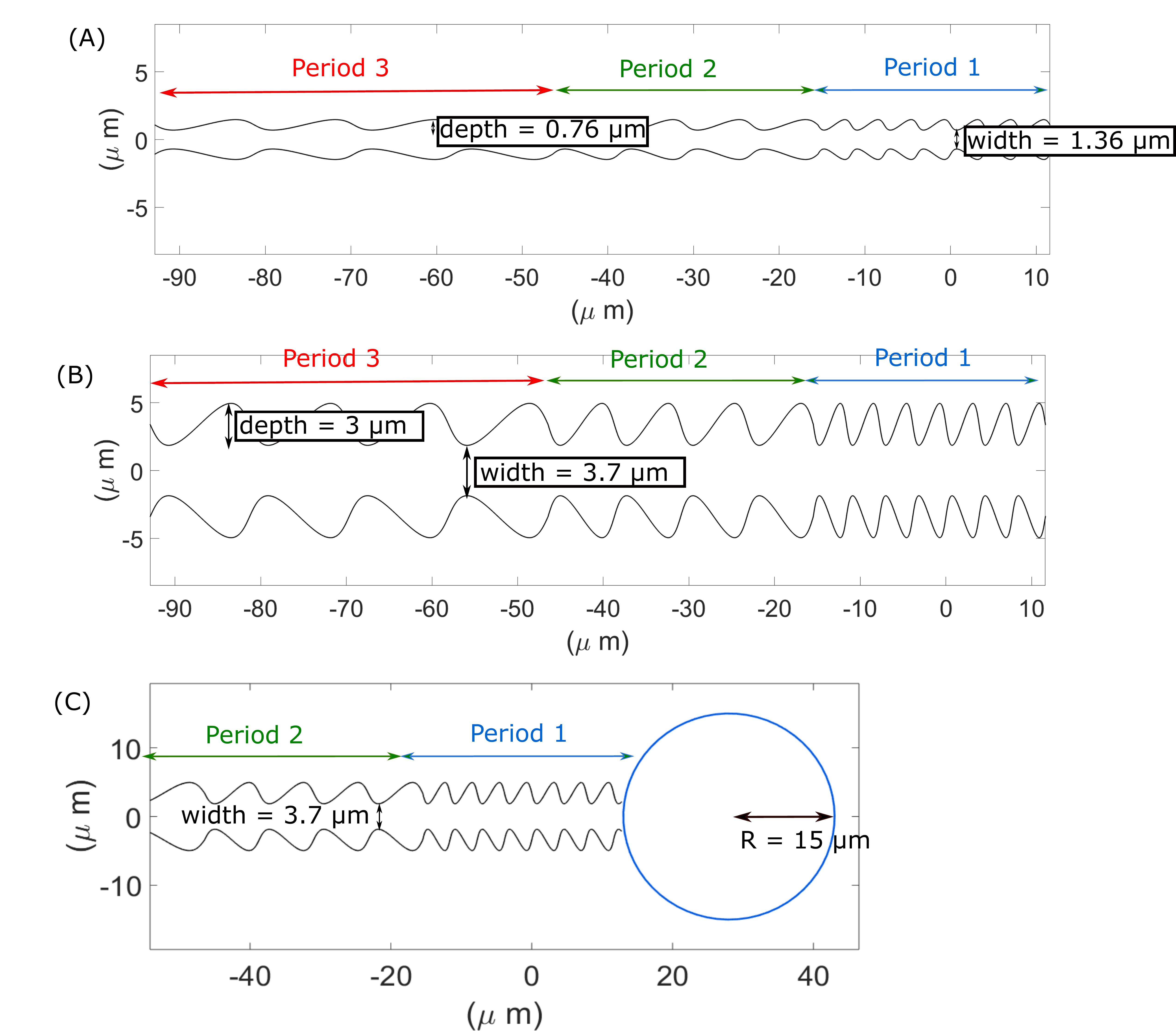}
\caption{Examples of the different obstacle geometries used in the simulations: we explore narrow obstacles of minimal width $1.36 \mu m$ and ratchets of amplitude $0.76 \mu m$ (A) or wide obstacles of width $3.7 \mu m$ and ratchets of amplitude $3 \mu m$ (B). (C) Example of a stationary cell (at equilibrium), before being pushed into a ratchet channel of wavelength $3.3 \mu m$ (Period 1) and $7.6 \mu m$ (Period 2), width $3.7 \mu m$ and amplitude $1.54 \mu m$.\label{ratchets}}
\end{figure}

Results of a typical simulation are shown in Figure~\ref{RatchetObst}. In this situation the cell is able to migrate in all three
different wavelengths. It seems that the average speed is highest in the part with the intermediate wavelength. Within each
wavelength region, the behavior seems to be periodic related to the periodicity of the channel walls. The bigger the
wavelength, the more pronounced are the peaks in the cell speed.

\begin{figure}[H]
\centering
\includegraphics[width=\textwidth]{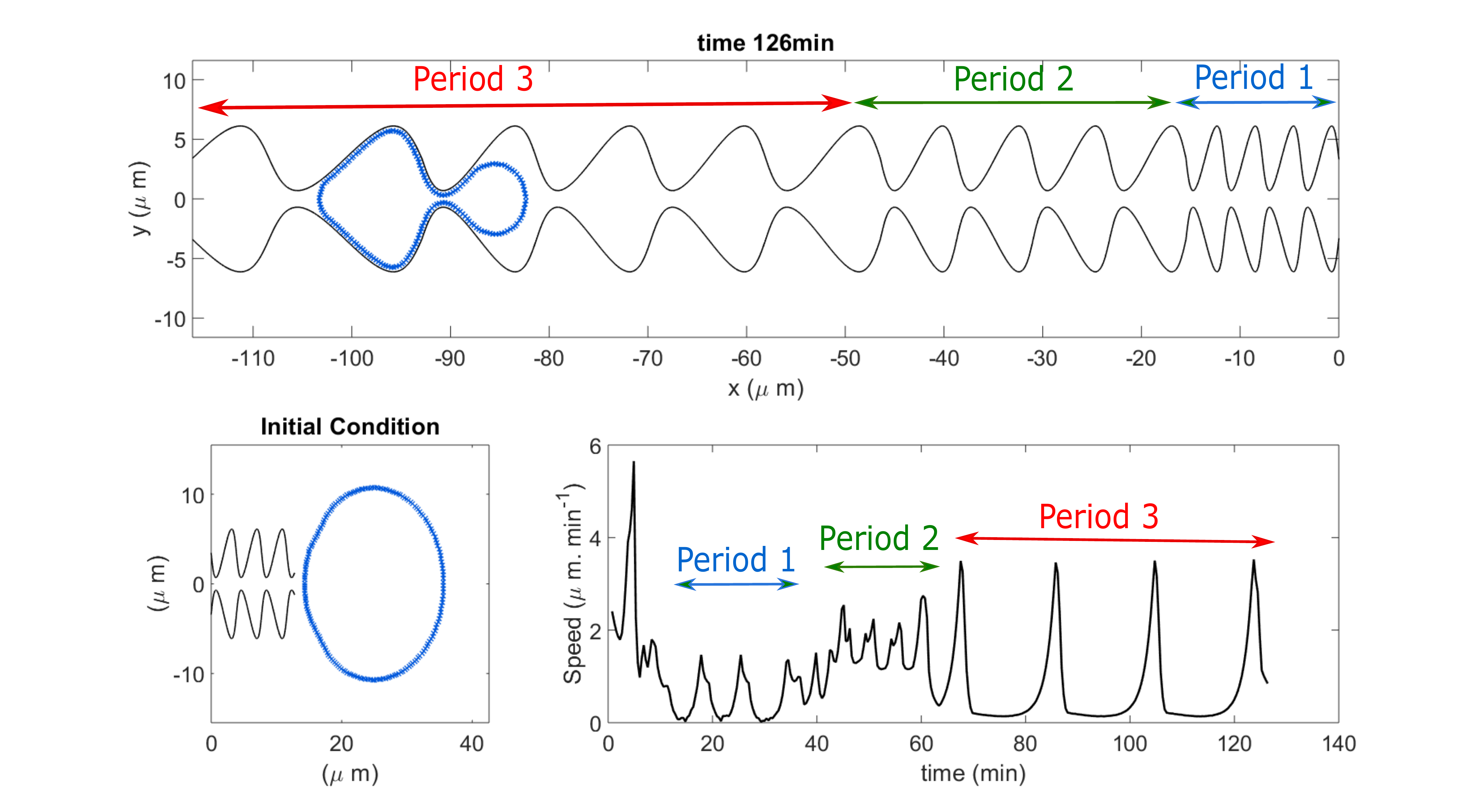}
\caption{Numerical simulation of a cell in a ratchet channel of minimal width 1.4 $\mu$ m, amplitude 2.7 $\mu$ m, and variable wave lengths (3.9, 7.6, and 11.7 $\mu$ m), for $h=0.1$. Bottom left: initial condition. Top: at time $2h$. Bottom right: Speed of the center of gravity vs. time.\label{RatchetObst}}
\end{figure}

A study of the dependence of the mean cell speed on the various parameters has been carried out (Figure~\ref{DiagramSpeed}). Not surprisingly, increasing the channel width decreases the cell speed and can prevent cell migration. For ratchets of the smallest wavelength 3.9 $\mu$ m, the cell is unable to migrate for large amplitudes. In this situation
the cell is unable to enter the ratchets, thus reducing its contact to the wall. The channel acts in these cases as if it had
flat walls. For period $7.6 \mu m$ ratchets, the optimal speed is obtained for large enough amplitude. This is expected, since for small amplitude the ratchet again acts as a flat walled channel. This fact is reinforced in channels of period 11.6 $\mu m$, where only
ratchets with large amplitude can induce cell migration.

To sum up, cell speed is directly linked to the cell compression induced by the channel geometry and to the possibility
for the cell to push against parts of the channel wall facing towards the migration direction.

\begin{figure}[H]
\centering
\includegraphics[width=\textwidth]{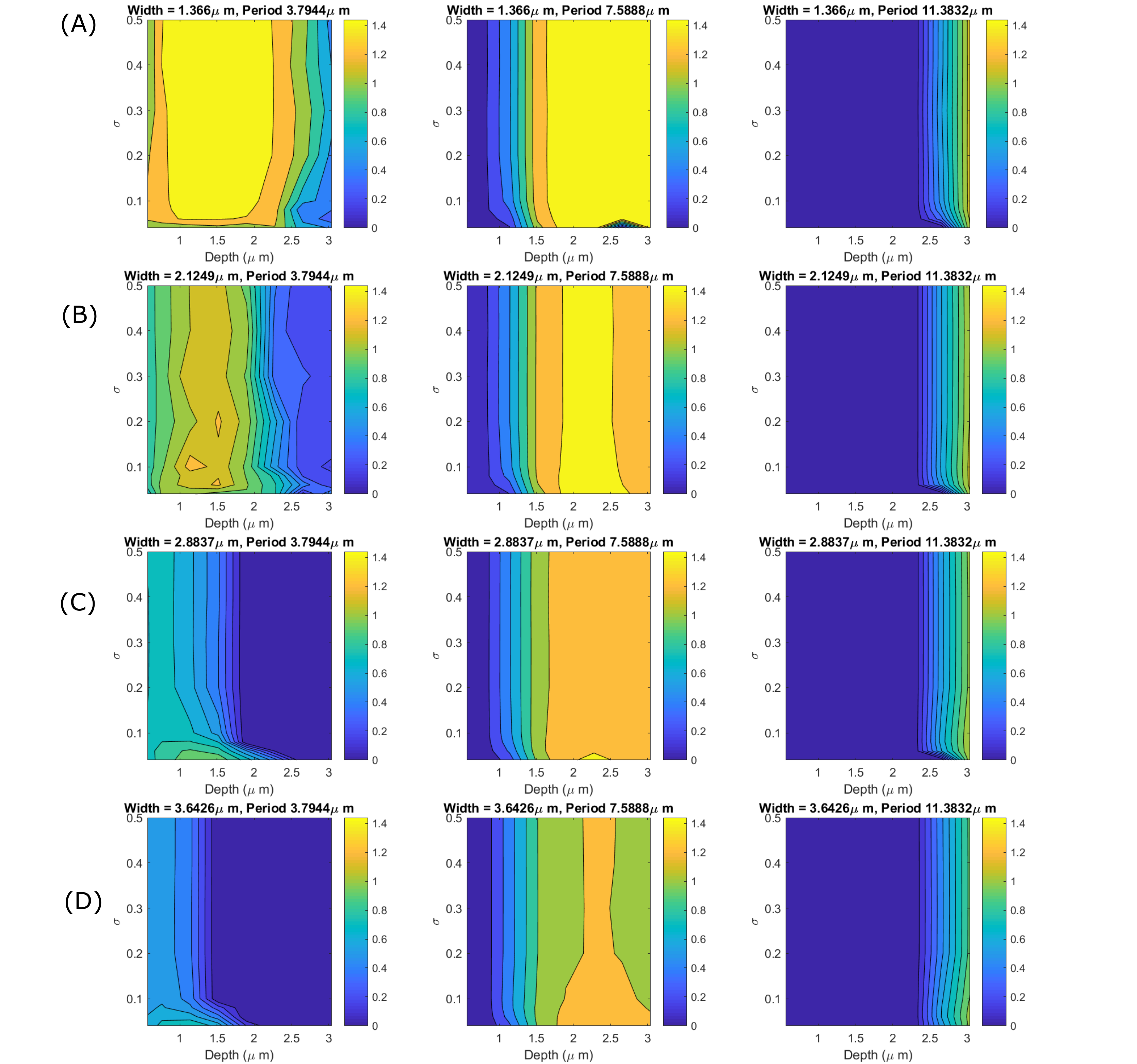}
\caption{Average cell speeds ($\mu m.min^{-1}$) in channels of different minimal widths $w_0$: $1.4 \mu m$ (A), $2.2 \mu m$ (B), $2.9 \mu m$ (C) and $3.7 \mu m$ (D). In each case, the cell speed is represented in dependence of the spread $h\in [0.04,0.8]$ of the
compensating force (vertical direction) and of the amplitude $d_0\in [0.78,3.1] \mu m$ (horizontal direction). We consider channel
walls with wave lengths $3.8 \mu m$ (left figures), $7.6 \mu m$ (middle figures) and $11.7 \mu m$ (right figures). \label{DiagramSpeed}}
\end{figure}

\section{Conclusions}

This study has been motivated by the experimental results described in Section \ref{sec:biological observations},
where adhesion-free migration of leukocytes in artificial, structured micro-channels has been observed.

A mathematical model for this process has been
formulated. Migration is assumed to be due to cortex flow, driven by a local imbalance of polymerization and
depolymerization in a polarized cell. Cell shape is stabilized by cytoplasmic pressure and elastic behavior of the
cortex. An undesired tangential friction, caused by the cortical flow, is balanced by a compensating force, which can
be attributed to the internal transport of depolymerized actin.

The model has the form of an obstacle problem for a strongly nonlinear degenerate parabolic system. Global existence
of solutions has been proven under natural assumptions on the data. The analysis relies on ideas from the theory of
gradient flows, employing the structure of the dominating elastic and pressure terms. The results are complete for an
approximate system with ``softened'' obstacles. The limit for hard obstacles can be carried out, however with an incomplete
characterization of the limiting problem.

For numerical simulations, a conservative, explicit-in-time discretization has been introduced. Under appropriate time step
restrictions, simulations are stable and (at least qualitatively) reproduce the behavior observed in the experiments.
In particular, migration needs both confinement and sufficiently structured channel walls. A parametric study shows the expected dependencies on geometric properties of the channel.

From a modelling point of view, this study has to be seen as a first step. Reliable experimental information on cell cortex
structure and dynamics is still scarce. The fact that in our model migration strongly depends on the force compensating
excess polymerization and depolymerization, is rather questionable. In ongoing work, the model is extended by a
viscous resistance against cortex bending. This effect seems to be a reasonable alternative providing the
necessary pushing force against the channel walls. However, its inclusion poses new challenges both from
an analytic and from a numerical point of view.


\appendix
\section{Appendix: A modified Morrey inequality}

\begin{lemma}
Let $u\in L^\infty((0,T); H^1(0,1))\cap H^1((0,T);L^2(0,1)) =: L_t^\infty H_s^1 \cap H_t^1 L_s^2$. Then for any  $(s_0,t_0),(s_1,t_1)\in (0,1)\times (0,T)$ we have:
\begin{equation*}
  |u(s_1,t_1) - u(s_0,t_0)| \le 8\left(  \|\partial_s u\|_{L_t^\infty L_s^2} +
    \|\partial_t u\|_{L_{s,t}^2}\right)(|s_1-s_0|^{1/2} + |t_1-t_0|^{1/4})\,.
\end{equation*}
\end{lemma}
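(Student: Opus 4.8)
The plan is to establish the bound for a representative of $u$ by splitting the space–time increment into a purely spatial one at fixed time and a purely temporal one at a fixed spatial point,
\[
|u(s_1,t_1)-u(s_0,t_0)|\le|u(s_1,t_1)-u(s_0,t_1)|+|u(s_0,t_1)-u(s_0,t_0)|,
\]
and to bound the two pieces separately. Write $A:=\|\partial_s u\|_{L_t^\infty L_s^2}$ and $B:=\|\partial_t u\|_{L_{s,t}^2}$. It suffices to argue for $t_0,t_1$ in the full–measure set of times at which $u(\cdot,t)\in H^1(0,1)$ with $\|\partial_s u(\cdot,t)\|_{L^2}\le A$; since the resulting estimate forces the a.e.\ defined $u$ to have a continuous representative, the inequality then propagates to all $(s_i,t_i)\in(0,1)\times(0,T)$.

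For the spatial increment I would use the classical one–dimensional argument: at a good time $t_1$, $u(s_1,t_1)-u(s_0,t_1)=\int_{s_0}^{s_1}\partial_s u(\sigma,t_1)\,d\sigma$, so Cauchy–Schwarz gives $|u(s_1,t_1)-u(s_0,t_1)|\le\|\partial_s u(\cdot,t_1)\|_{L^2}\,|s_1-s_0|^{1/2}\le A\,|s_1-s_0|^{1/2}$.

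The temporal increment is the crux, and the obstacle is the asymmetry of the two norms: $\partial_t u$ is only controlled in $L^2$ jointly in $(s,t)$, so $\partial_t u(s_0,\cdot)$ at the fixed point $s_0$ carries no $L^2_t$ bound and cannot be integrated in time directly. The remedy is to replace the point value by its spatial average over a short interval $I_h\subset(0,1)$ of length $h\le1$ with $s_0\in\overline{I_h}$, setting $\bar u_h(t):=h^{-1}\int_{I_h}u(\sigma,t)\,d\sigma$. Then, using $|\sigma-s_0|\le h$ on $I_h$ together with the spatial bound above, $|u(s_0,t)-\bar u_h(t)|\le h^{-1}\int_{I_h}\|\partial_s u(\cdot,t)\|_{L^2}|\sigma-s_0|^{1/2}\,d\sigma\le A\,h^{1/2}$ at every good $t$; while $\bar u_h(t_1)-\bar u_h(t_0)=h^{-1}\iint_{I_h\times(t_0,t_1)}\partial_t u$, so Cauchy–Schwarz over the rectangle yields $|\bar u_h(t_1)-\bar u_h(t_0)|\le h^{-1}B\,(h\,|t_1-t_0|)^{1/2}=B\,h^{-1/2}|t_1-t_0|^{1/2}$. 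The triangle inequality through $\bar u_h(t_1)$ and $\bar u_h(t_0)$ gives
\[
|u(s_0,t_1)-u(s_0,t_0)|\le 2A\,h^{1/2}+B\,h^{-1/2}|t_1-t_0|^{1/2}.
\]

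It remains to balance the two terms: taking $h\sim|t_1-t_0|^{1/2}$ (clipped to $h\le1$ so that a suitable $I_h\ni s_0$ fits inside $(0,1)$) converts the $1/2$–Hölder continuity in $t$ of the average $\bar u_h$ into the $1/4$–Hölder continuity of $u$, giving $|u(s_0,t_1)-u(s_0,t_0)|\le(2A+B)|t_1-t_0|^{1/4}$ in the relevant regime $|t_1-t_0|\le1$. Combining with the spatial estimate and the crude bounds $A\le A+B$, $2A+B\le2(A+B)$,
\[
|u(s_1,t_1)-u(s_0,t_0)|\le A\,|s_1-s_0|^{1/2}+(2A+B)\,|t_1-t_0|^{1/4}\le 8\,(A+B)\bigl(|s_1-s_0|^{1/2}+|t_1-t_0|^{1/4}\bigr),
\]
the constant $8$ leaving comfortable room. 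Apart from this one structural point — the lack of pointwise-in-$s$ control on $\partial_t u$, bypassed by the mollification-in-$s$ step — everything reduces to Cauchy–Schwarz and the fundamental theorem of calculus.
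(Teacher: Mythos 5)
Your proof is correct, but it follows a genuinely different route from the paper's. The paper works with a single space--time rectangle $W$ of anisotropic dimensions $\Delta t\times(\Delta s+\sqrt{\Delta t})$, writes $u(s_1,t_1)-u(s_0,t_0)$ as a sum of two averages over $W$, and connects each endpoint to a generic point of $W$ along the parabolically scaled curve $p\mapsto(s_0+(\sigma-s_0)\sqrt{p},\,t_0+(\tau-t_0)p)$, estimating the line integrals of $\partial_s u$ and $\partial_t u$ by Cauchy--Schwarz after a change of variables --- a Morrey/Campanato-type argument adapted to parabolic scaling. You instead split the increment into a pure space step (classical one-dimensional Morrey via the fundamental theorem of calculus) and a pure time step, and handle the latter by comparing $u(s_0,\cdot)$ with its Steklov average over an interval of length $h$ and optimizing $h\sim|t_1-t_0|^{1/2}$. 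Your version is more elementary (no curve integration or coordinate transformation) and makes it transparent where the exponent $1/4$ comes from, namely from balancing $A\,h^{1/2}$ against $B\,h^{-1/2}|t_1-t_0|^{1/2}$; the paper's version treats both endpoints symmetrically at the cost of heavier bookkeeping. You are also more careful than the paper on one point that affects both arguments identically: the averaging region of spatial width $\sim\sqrt{\Delta t}$ (your $I_h$, the paper's $W$) must fit inside $(0,1)$, which fails when $|t_1-t_0|>1$. You flag this via the clipping $h\le 1$; the paper silently assumes $\Delta s+\sqrt{\Delta t}\le 1$. For $|t_1-t_0|$ large the stated inequality with the absolute constant $8$ is in fact not true in general (take $u(s,t)=ct$ with $T$ large, so that $\|\partial_t u\|_{L^2_{s,t}}=c\sqrt{T}$ while the increment over $(0,T)$ is $cT>8cT^{3/4}$), so this is a defect of the lemma as stated rather than of your argument; in the regime $|t_1-t_0|\le 1$, which is the one used in Remark~\ref{rem1}, your proof is complete and the constant $8$ is comfortably sufficient.
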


\begin{proof}
We introduce $\Delta s := |s_1-s_0|$ and $\Delta t := |t_1-t_0|$ and consider a rectangle $W\subset (0,1)\times(0,T)$, containing the points $(s_0,t_0),(s_1,t_1)$ with sides parallel to the $t$-axis of lengths $\Delta t$ and parallel to the
$s$-axis of lengths $\Delta s + \sqrt{\Delta t}$, such that $|W| = \Delta t(\Delta s + \sqrt{\Delta t})$. We have
\begin{multline}\label{Delta-u}
  u(s_1,t_1) - u(s_0,t_0) =
  \\\frac{1}{|W|} \int_W (u(s_1,t_1) - u(\sigma,\tau))d(\sigma,\tau) +
  \frac{1}{|W|} \int_W (u(\sigma,\tau) - u(s_0,t_0))d(\sigma,\tau) \,.
\end{multline}
For estimating the second term, we introduce the curve $\{(s_0 + (\sigma-s_0) \sqrt{p},t_0+(\tau-t_0) p):\, 0\le p\le 1\}$,
whence it can be estimated by
\begin{equation*}
   \frac{1}{|W|} \int_0^1 \frac{1}{2\sqrt{p}} \int_W |\sigma-s_0|\, |\partial_s u| d(\sigma,\tau)\, dp +
   \frac{1}{|W|} \int_0^1 \int_W |\tau-t_0|\, |\partial_t u| d(\sigma,\tau)\, dp \,,
\end{equation*}
where the derivatives of $u$ are evaluated along the curve. Employing the Cauchy-Schwarz inequality, this can be
estimated further by
\begin{equation*}
   \frac{\Delta s + \sqrt{\Delta t}}{\sqrt{|W|}} \int_0^1 \frac{1}{2\sqrt{p}} \sqrt{\int_W  |\partial_s u|^2 d(\sigma,\tau)}\, dp +
   \frac{\Delta t}{\sqrt{|W|}} \int_0^1 \sqrt{ \int_W |\partial_t u|^2 d(\sigma,\tau)}\, dp \,.
\end{equation*}
In the second integral over $W$ we introduce the new coordinates $(s,t) = (s_0 + (\sigma-s_0) \sqrt{p},t_0+(\tau-t_0) p)$;
in the first one we first estimate the integrand by its supremum with respect to $t$ and then make the coordinate
transformation only in $s$:
\begin{eqnarray*}
   && \sqrt{\Delta s + \sqrt{\Delta t}} \int_0^1 \frac{dp}{2p^{3/4}} \, \|\partial_s u\|_{L_t^\infty L_s^2} +
   \sqrt{\frac{\Delta t}{\Delta s + \sqrt{\Delta t}}} \int_0^1 \frac{dp}{p^{3/4}} \, \|\partial_t u\|_{L_{s,t}^2} \\
   && \le \left(2  \|\partial_s u\|_{L_t^\infty L_s^2} +
   4 \, \|\partial_t u\|_{L_{s,t}^2}\right) (\Delta s^{1/2} + \Delta t^{1/4})\,.
\end{eqnarray*}
An analogous treatment of the first term on the right hand side of \eqref{Delta-u} completes the proof.
\end{proof}

\paragraph{Acknowledgments}
This work has been supported by the Vienna Science and Technology Fund, Grant no. LS13-029.
G.J. and C.S. also acknowledge support by the Austrian Science Fund, Grants no. W1245, F 65, and W1261, as well as by the Fondation Sciences Math\'ematiques de Paris, and by Paris-Sciences-et-Lettres.

\bibliography{JPRSS.2019}
\bibliographystyle{abbrv}

\end{document}